\newtheorem{theorem}{Theorem}
\newtheorem{lemma}[theorem]{Lemma}
\newtheorem{proposition}[theorem]{Proposition}
\newtheorem{bc}[theorem]{Brown's Filtration Criterion}
\def\slp{{\mathbf{SL_n}(\mathbb{Z}[t])}}
\def\sll{{\mathbf{SL_n}(\mathbb{Q}((t^{-1})))}}
\def\slr{{\mathbf{SL_n}(\mathbb{Q}[t])}}
\def\slt{{\mathbf{SL_n}(\mathbb{Q}[[t^{-1}]])}}
\begin{document}

\title{ $\slp$ is not $FP_{n-1}$}
\author{Kai-Uwe Bux, Amir Mohammadi, Kevin Wortman
\thanks{Supported in part by an N.S.F. Grant DMS-0604885.} }
\date{December 12, 2007}
\maketitle

\begin{abstract}
We prove the result from the title using the geometry of Euclidean
buildings.
\end{abstract}

\section{Introduction}\label{s:intro}

Little is known about the finiteness properties of $\slp$ for
arbitrary $n$.

In 1959 Nagao proved that if $k$ is a field then
$\mathbf{SL_2}(k[t])$ is a free product with amalgamation
\cite{Nagao}. It follows from his description that
$\mathbf{SL_2}(\mathbb{Z}[t])$ and its abelianization are not
finitely generated.

In 1977 Suslin proved that when $n\geq 3,$ $\slp$ is finitely
generated by elementary matrices \cite{Suslin}. It follows that
$H_1(\slp , \mathbb{Z})$ is trivial when $n \geq 3$.

More recent, Krsti\'{c}-McCool proved that
$\mathbf{SL_3}(\mathbb{Z}[t])$ is not finitely presented
\cite{K-M}.

It's also worth pointing out that since $\slp$ surjects onto
$\mathbf{SL_n}(\mathbb{Z})$, that $\slp$ has finite index
torsion-free subgroups.

In this paper we provide a generalization of the results of Nagao
and Krsti\'{c}-McCool mentioned above for the groups $\slp$.

\begin{theorem}\label{t:main}
If $n\geq 2$, then $\slp$ is not of type $FP_{n-1}$.
\end{theorem}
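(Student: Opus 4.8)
The plan is to apply Brown's Filtration Criterion to the action of $\Gamma:=\slp$ on the Euclidean building $X$ of $\sll$, the Bruhat--Tits building attached to the $t^{-1}$-adic valuation; the action is induced by the inclusions $\ZZ[t]\subseteq\QQ[t]\subseteq\QQ((t^{-1}))$. The building $X$ is a contractible CW-complex of dimension $n-1$ (being CAT(0)), so Brown's criterion will read off the finiteness properties of $\Gamma$ from the topology of a $\Gamma$-invariant exhaustion of $X$. Concretely, Brown's criterion says that $\Gamma$ is of type $FP_{n-1}$ if and only if the direct system $\{\tilde H_k(X_{\le r};\ZZ)\}_r$ formed by the terms $X_{\le r}$ of the exhaustion is essentially trivial (every comparison map is eventually the zero map) for all $k\le n-2$; the strategy is to show that this fails in degree $k=n-2$.

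Two preliminary points are needed. First, the cell stabilizers of the $\Gamma$-action are of type $FP_\infty$: the stabilizer of a cell $\sigma$ is $\Gamma\cap P_\sigma$ for a parahoric subgroup $P_\sigma$ of $\sll$, and after conjugating $\sigma$ into the standard apartment one sees this intersection is, up to finite index, an extension of an arithmetic group (a parabolic subgroup of some $\mathbf{SL}_m(\ZZ)$) by a finitely generated free abelian group coming from off-diagonal polynomial entries of bounded degree -- in particular the stabilizer of the vertex $v_0$ fixed by $\slt$ is $\mathbf{SL}_n(\ZZ)$; since arithmetic subgroups of $\mathbf{SL}_m$ over $\ZZ$ are $FP_\infty$ and extensions of $FP_\infty$ groups by $FP_\infty$ groups are $FP_\infty$, each stabilizer is $FP_\infty$. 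Second -- and this is the main obstacle -- one must produce a $\Gamma$-invariant ``height'' function $h\colon X\to[0,\infty)$ whose sublevel sets $X_{\le r}$ are subcomplexes with finitely many $\Gamma$-orbits of cells and whose union is $X$. This requires fusing two pieces of reduction theory: at the $t^{-1}$-adic place, reduction theory over $\QQ[t]$ (Soul\'e, generalizing Nagao's description) gives that $\slr$ acts on $X$ with fundamental domain a single Weyl sector $\mathfrak{S}$ based at $v_0$; but because the residue field $\QQ$ is not finite, $\Gamma$ does not act cocompactly on $\slr$-cocompact sets, so one must additionally feed in classical reduction theory for $\mathbf{SL}_n(\ZZ)$ acting on the rational spherical buildings occurring as links, to control the ``residue-field direction''. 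It is exactly this combination -- unnecessary in the function-field case $\mathbf{SL}_n(\mathbb{F}_q[t])$, where the residue field is finite and $X$ is locally finite -- that forms the technical core.

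Granting such an $h$, I would analyze the exhaustion by combinatorial Morse theory. The change in homotopy type in passing from $X_{\le r}$ to $X_{\le r'}$ is governed by the relevant relative (descending) links, and these are identified, up to homotopy, with spherical buildings: the apartment-at-infinity structure of the $\tilde A_{n-1}$-building $X$ together with the spherical buildings of $\mathbf{SL}_n$ over the residue field $\QQ$. By the Solomon--Tits theorem these are homotopy equivalent to wedges of $(n-2)$-spheres; thus they are $(n-3)$-connected but have very large reduced homology in degree $n-2$.

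Consequently the sublevel sets $X_{\le r}$ are (eventually) $(n-3)$-connected, but their $(n-2)$-nd homology cannot be essentially killed: for each $r$ I would exhibit an infinite family of $(n-2)$-cycles supported near level $r$ -- assembled from apartments of $X$ running through the deep chambers of $\mathfrak{S}$ and from the (infinitely many, since $\QQ$ is infinite) $(n-2)$-spheres furnished by Solomon--Tits -- whose classes in $\tilde H_{n-2}(X_{\le R};\ZZ)$ are non-zero for every $R\ge r$, non-vanishing being verified by intersecting with the relevant descending links or by a retraction onto an apartment. Since the members of such a family die only at unboundedly large stages $R$, the comparison maps $\tilde H_{n-2}(X_{\le r})\to\tilde H_{n-2}(X_{\le R})$ are never zero, so the system $\{\tilde H_{n-2}(X_{\le r})\}_r$ is not essentially trivial. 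By Brown's Filtration Criterion, $\Gamma=\slp$ is not of type $FP_{n-1}$, proving the theorem.
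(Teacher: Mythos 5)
Your high-level frame (act on the Euclidean building $X$ of $\sll$, check the stabilizers, apply Brown's criterion in degree $n-2$) matches the paper, and your stabilizer discussion is essentially right: the paper shows the cell stabilizers are literally arithmetic groups $\mathbf{G}_z(\mathbb{Z})$ for affine $\mathbb{Q}$-groups $\mathbf{G}_z$ cut out by bounding the degrees of the polynomial entries, and then quotes Borel--Serre. But from that point on your proposal substitutes a program for a proof, and the two places where you defer the work are exactly where the paper's content lies. First, you require a $\Gamma$-invariant height function $h$ whose sublevel sets are cocompact and whose descending links are Solomon--Tits wedges of $(n-2)$-spheres. No such reduction theory for $\slp$ on $X$ is constructed here or in the literature you implicitly invoke; the paper deliberately avoids needing one by taking $X_0=\slp D_e$ and then choosing the rest of the filtration $X_0\subseteq X_1\subseteq\cdots$ \emph{arbitrarily} among invariant cocompact subcomplexes -- Brown's criterion only asks you to exhibit, for each $i$, a class in $\widetilde{H}_{n-2}(X_0)$ surviving in $\widetilde{H}_{n-2}(X_i)$, so no Morse-theoretic control of the whole filtration is needed.

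Second, and more seriously, you never construct the non-bounding $(n-2)$-cycles, and the cycles you gesture at (Solomon--Tits spheres in descending links, detected ``by a retraction onto an apartment'') would not obviously work: such spheres sit in links of single vertices and cone off inside moderately larger sublevel sets. The paper's cycles $\omega_\ell$ are built from a genuinely nontrivial algebraic input that your proposal has no counterpart for: Proposition~\ref{p:tori}, which produces a free abelian subgroup $A\cong\mathbb{Z}^{n-2}$ of $\mathbf{SL_{n-1}}(\mathbb{Z}[t])$ that is diagonalizable over $\mathbb{Q}((t^{-1}))$ and acts with no fixed points, hence cocompactly on a codimension-one flat $V_e$ in an apartment. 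This requires constructing by hand a root $\alpha$ of $\prod_i(x+q_it)-1$ in $\mathbb{Q}((t^{-1}))$ and representing $\mathbb{Z}[t][\alpha]$ by matrices, precisely because $\mathbb{Q}((t^{-1}))$ is not locally compact and the standard theory of tori over local fields does not hand you enough $\mathbb{Z}[t]$-points. Given $A$, the sphere $\omega_\ell$ is the boundary of an explicit $(n-1)$-ball $Y_\ell$ containing a point $b^ke\notin X_i$ (Lemma~\ref{l:ik}, a Mahler-type non-cocompactness statement); since $X$ is contractible of dimension exactly $n-1$, any chain bounding $\omega_\ell$ must contain $Y_\ell$ and hence $b^ke$, which is the actual non-vanishing argument. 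Finally, showing $\omega_\ell\subseteq X_0$ requires the decomposition $u=u'u''$ of unipotent elements into $\mathbb{Q}[t]$- and $\mathbb{Q}[[t^{-1}]]$-parts and a choice of large $\ell$ making the conjugates land in $\slp$. None of these steps is routine, and your proposal as written cannot be completed without supplying them or a genuine substitute.
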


Recall that a group $\Gamma$ is \emph{of type }$FP_m$ if if there
exists a projective resolution of $\mathbb{Z}$ as the trivial
$\mathbb{Z}\Gamma $ module
$$P_m \rightarrow P_{m-1} \rightarrow \cdot \cdot \cdot \rightarrow P_1 \rightarrow P_0 \rightarrow \mathbb{Z} \rightarrow 0$$
where each $P_i$ is a finitely generated $\mathbb{Z} \Gamma$
module.

In particular, Theorem~\ref{t:main} implies that there is no
$K(\slp , 1)$ with finite $(n-1)$-skeleton, where $K(G,1)$ is the
Eilenberg-Mac\,Lane space for $G$.

\begin{subsection}{Outline of paper}
The general outline of this paper is modelled on the proofs in
\cite{BW1} and \cite{BW2}, though some important modifications
have to be made to carry out the proof in this setting.

As in \cite{BW1} and \cite{BW2}, our approach is to apply Brown's
filtration criterion \cite{Brown filtration}. Here we will examine
the action of $\slp$ on the locally infinite Euclidean building
for $\sll$. In Section~\ref{s:stab} we will show that the infinite
groups that arise as cell stabilizers for this action are of type
$FP_m$ for all $m$, which is a technical condition that is needed
for our application of Brown's criterion.

In Section~\ref{s:tori} we will demonstrate the existence of a
family of diagonal matrices that will imply the existence of a ``nice"
isometrically embedded codimension $1$ Euclidean space in the
building for $\sll$. In \cite{BW1} analogous families of diagonal
matrices were constructed using some standard results from the
theory of algebraic groups over locally compact fields. Because
$\mathbb{Q}((t^{-1}))$ is not locally compact, our treatment in
Section~\ref{s:tori} is quite a bit more hands on.

Section~\ref{s:body} contains the main body of our proof. We use
translates of portions of the codimension 1 Euclidean subspace
found in Section~\ref{s:tori} to construct spheres in the
Euclidean building for $\sll$ (also of codimension 1). These
spheres will lie ``near'' an orbit of $\slp$, but will be nonzero
in the homology of cells ``not as near'' the same $\slp$ orbit.
Theorem~\ref{t:main} will then follow from Brown's criterion.

\end{subsection}

\section{Stabilizers}\label{s:stab}

\begin{lemma}\label{l:stab} If $X$ is the Euclidean building for $\sll$, then the $\slp$ stabilizers of cells in $X$ are
$FP_m$ for all $m$. \end{lemma}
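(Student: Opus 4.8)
The building $X$ for $\mathbf{SL_n}(\mathbb{Q}((t^{-1})))$ is the Bruhat-Tits building associated to the discrete valuation on the field $\mathbb{Q}((t^{-1}))$, where the valuation ring is $\mathbb{Q}[[t^{-1}]]$. A cell stabilizer in $\mathbf{SL_n}(\mathbb{Q}((t^{-1})))$ is (up to finite index and commensurability issues) a parahoric subgroup, and intersecting with $\slp = \mathbf{SL_n}(\mathbb{Z}[t])$ gives the stabilizer we need to understand.

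Key idea: relate stabilizers to arithmetic groups over $\mathbb{Z}$. A vertex of $X$ corresponds to a homothety class of $\mathbb{Q}[[t^{-1}]]$-lattices in $\mathbb{Q}((t^{-1}))^n$; the stabilizer of such a lattice in $\mathbf{SL_n}(\mathbb{Q}((t^{-1})))$ is conjugate to $\mathbf{SL_n}(\mathbb{Q}[[t^{-1}]])$. Intersecting with $\mathbf{SL_n}(\mathbb{Z}[t])$: the point is that $\mathbb{Z}[t] \cap \mathbb{Q}[[t^{-1}]] = \mathbb{Z}$ (a polynomial lies in $\mathbb{Q}[[t^{-1}]]$, i.e. has nonpositive degree in $t$, only if it is a constant, and being in $\mathbb{Z}[t]$ forces that constant into $\mathbb{Z}$). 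So up to conjugating the lattice the stabilizer is commensurable with — in fact essentially equal to, after handling the conjugation — $\mathbf{SL_n}(\mathbb{Z})$, possibly twisted. More precisely, a cell stabilizer will be the group of $\mathbb{Z}[t]$-points of $\mathbf{SL_n}$ that preserve a given lattice, and the analysis above shows such a group is an arithmetic subgroup of $\mathbf{SL_n}(\mathbb{Q})$ (the polynomial entries are forced to have bounded, hence zero, $t$-degree).

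So I would structure the proof as: (1) Describe cells of $X$ as (chains of) lattice classes and identify the $\mathbf{SL_n}(\mathbb{Q}((t^{-1})))$-stabilizer of a cell $\sigma$ with a parahoric subgroup $P_\sigma$. (2) Show $\mathrm{Stab}_{\slp}(\sigma) = \slp \cap g P_\sigma g^{-1}$ for appropriate $g$, and use the intersection computation $\mathbb{Z}[t] \cap \mathbb{Q}[[t^{-1}]] = \mathbb{Z}$ (applied entry-by-entry to a matrix, after conjugating so that $P_\sigma$ is "standard") to conclude this group is (commensurable with, hence for $FP_m$ purposes as good as) an arithmetic subgroup of $\mathbf{SL_n}(\mathbb{Z})$. (3) Invoke the Borel–Serre theorem: arithmetic groups are of type $FP_\infty$, hence $FP_m$ for all $m$; and $FP_m$ is a commensurability invariant. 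I would cite Borel–Serre for the finiteness properties of arithmetic groups.

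The main obstacle is step (2): controlling the conjugating element $g$. A general parahoric is not the stabilizer of a single lattice but of a flag of lattices (a cell, not a vertex), and the conjugating matrix $g \in \mathbf{SL_n}(\mathbb{Q}((t^{-1})))$ has entries that are Laurent series in $t^{-1}$, not polynomials, so $g P_\sigma g^{-1} \cap \slp$ is not obviously arithmetic — one has to check that the polynomial constraint from $\mathbb{Z}[t]$ together with the integrality constraint coming from $P_\sigma$ still forces bounded $t$-degree. I expect this comes down to observing that $\slp$ acts on the lattices underlying $\sigma$, those lattices are finitely generated $\mathbb{Q}[[t^{-1}]]$-modules, and an element of $\slp$ preserving them must have entries (in a suitable basis) lying in $\mathbb{Z}[t] \cap \mathbb{Q}[[t^{-1}]] = \mathbb{Z}$ up to bounded denominators — so the stabilizer sits inside $\mathbf{SL_n}(\frac{1}{N}\mathbb{Z})$-type group for some $N$, which is arithmetic. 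Once arithmeticity is established, the finiteness property is immediate from Borel–Serre, so that citation is the painless end of the argument.
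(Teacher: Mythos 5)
There is a genuine gap, and it is in the step you flagged as the ``main obstacle'' but then resolved incorrectly. Your claim that the stabilizer of a cell is commensurable with $\mathbf{SL_n}(\mathbb{Z})$ --- or sits inside a group of the form $\mathbf{SL_n}(\tfrac{1}{N}\mathbb{Z})$ --- is false for every vertex other than the standard one. The computation $\mathbb{Z}[t]\cap\mathbb{Q}[[t^{-1}]]=\mathbb{Z}$ identifies the stabilizer of the vertex $x_0$ fixed by $\slt$ as $\mathbf{SL_n}(\mathbb{Z})$, but for a vertex $z=hD(t^{m_1},\dots,t^{m_n})x_0$ the conjugation by the diagonal matrix shifts the valuation condition entry by entry: the $(i,j)$ entry is constrained to have degree at most $m_i-m_j$, which is positive for many entries. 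For example, $\bigl(\begin{smallmatrix}1&p(t)\\0&1\end{smallmatrix}\bigr)$ with $\deg p\le m$ stabilizes the lattice $t^{m}\mathbb{Q}[[t^{-1}]]\oplus\mathbb{Q}[[t^{-1}]]$, so these stabilizers contain matrices with genuinely non-constant polynomial entries and are not subgroups of $\mathbf{SL_n}(\mathbb{Q})$ at all (for $n=2$ they are virtually abelian of large rank, certainly not commensurable with $\mathbf{SL_2}(\mathbb{Z})$). So ``bounded, hence zero, $t$-degree'' is exactly the wrong conclusion: the degrees are bounded but not zero, and arithmeticity does not follow from your argument as stated.

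The missing idea, which is the heart of the paper's proof, is to convert ``matrices with polynomial entries of bounded degree and determinant $1$'' into the points of an affine algebraic group over $\mathbb{Q}$ in a \emph{different} representation: treat the space $W_N$ of polynomials of degree at most $N$ as an $(N+1)$-dimensional $\mathbb{Q}$-vector space, take the subvariety of $\prod_i W_{N_i}$ cut out by the determinant equation, and observe that the degree bounds coming from the cell make this a $\mathbb{Q}$-group $\mathbf{G}_z$ whose $\mathbb{Z}$-points are precisely the $\slp$-stabilizer. Only then does Borel--Serre apply (to $\mathbf{G}_z(\mathbb{Z})$, not to anything commensurable with $\mathbf{SL_n}(\mathbb{Z})$). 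Your other worry --- that the conjugating element $g$ is a general Laurent-series matrix --- is legitimate but is handled in the paper by Soul\'e's theorem that the standard sector is a fundamental domain for $\slr$ acting on $X$: this lets one take the conjugator to be a polynomial matrix times a diagonal power of $t$, so the conjugated degree bounds remain of the required form. Your outline (reduce to arithmetic groups, cite Borel--Serre) is the right strategy, but without the change of algebraic group it does not go through.
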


\begin{proof} Let $x_0 \in X$ be the vertex stabilized by $\slt$. We
denote a diagonal matrix in $\mathbf{GL_n}(\mathbb{Q}((t^{-1})))$
with entries $s_1, s_2,..., s_n \in \mathbb{Q}((t^{-1}))^\times$
by $D(s_1,s_2,...,s_n)$, and we let $\mathfrak{S} \subseteq X$ be
the sector based at $x_0$ and containing vertices of the form
$D(t^{m_1}, t^{m_2},...,t^{m_n})x_0$ where each $m_i \in
\mathbb{Z}$ and $m_1 \geq m_2 \geq ... \geq m_n$.

The sector $\mathfrak{S}$ is a fundamental domain for the action
of $\slr$ on $X$ (see \cite{So}). In particular, for any vertex $z
\in X$, there is some $h'_z\in \slr$ and some integers $m_1 \geq
m_2 \geq ... \geq m_n$ with $z=h'_zD_z(t^{m_1},
t^{m_2},...,t^{m_n})x_0$. We let $h_z=h'_zD_z(t^{m_1},
t^{m_2},...,t^{m_n}).$

For any $N \in \mathbb{N}$, let $W_N$ be the $(N+1)$-dimensional
vector space $$W_N=\{\,p(t) \in \mathbb{C}[t] \mid
\text{deg}\big(p(t)\big) \leq N \}$$ which is endowed with the
obvious $\mathbb{Q}-$structure. If $N_1,\cdots,N_{n^2}$ in
$\mathbb{N}$ are arbitrary then let
$$\mathbf{G}_{\{N_1,\cdots,N_{n^2}\}}=\{\mathbf{x}\in\prod_{i=1}^{n^2}W_{N_i}|
\hspace{1mm}\mbox{det}(\mathbf{x})=1\}$$ where
$\mbox{det}(\mathbf{x})$ is a polynomial in the coordinates of
$\mathbf{x}.$ To be more precise this is obtained from the usual
determinant function when one considers the usual $n\times n$
matrix presentation of $\mathbf{x},$ and calculates the
determinant in $\mathbf{Mat}_n(\mathbb{C}[t]).$

For our choice of vertex $z \in X$ above, the stabilizer of $z$ in
$\sll$ equals $h_z \slt h_z^{-1}.$ And with our fixed choice of
$h_z$, there clearly exist some $N^z_i \in \mathbb{N}$ such that
the stabilizer of the vertex $z$ in $\slr$ is
$\mathbf{G}_{\{N^z_1,\cdots,N^z_{n^2}\}}(\mathbb{Q})$.
Furthermore, conditions on $N^z_i$ force a group structure on
$\mathbf{G}_z=\mathbf{G}_{\{N^z_1,\cdots,N^z_{n^2}\}}.$ Therefore,
the stabilizer of $z$ in $\slr$ is the $\mathbb{Q}-$points of the
affine $\mathbb{Q}$-group $\mathbf{G}_z$, and the stabilizer of
$z$ in $\slp$ is $\mathbf{G}_z(\mathbb{Z})$.

The action of $\slr$ on $X$ is type preserving, so if $\sigma
\subset \mathfrak{S}$ is a simplex with vertices $z_1, z_2,...,z_m$, then
the stabilizer of $\sigma$ in $\slp$ is simply
$$\big(\mathbf{G}_{z_1} \cap ... \cap
\mathbf{G}_{z_m}\big)(\mathbb{Z})$$ That is, the stabilizer of
$\sigma$ in $\slp$ is an arithmetic group, and Borel-Serre proved
that any such group is $FP_m$ for all $m$ \cite{B-S}.

 \end{proof}

\section{Polynomial points of tori}\label{s:tori}

This section is devoted exclusively to a proof of the following

\begin{proposition}\label{p:tori} There is a group $A \leq \slp$ such that
\begin{quote} (i) $A \cong \mathbb{Z}^{n-1}$ \\
(ii) There is some $g \in \sll$ such that $gAg^{-1}$ is a group of
diagonal matrices \\
(iii) No nontrivial element of $A$ fixes a point in the Euclidean
building for $ \sll $.
\end{quote}
\end{proposition}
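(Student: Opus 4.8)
The plan is to realize $A$ as a group of units of the ring $\cO=\ZZ[t][X]/(f(X))$ for a carefully chosen monic polynomial $f$, embedded into $\slp$ by its regular representation, and to detect the action on the building through the valuations of the conjugates of these units. Fix $p_1(t),\dots,p_n(t)\in\ZZ[t]$ of pairwise distinct degrees, say $p_j(t)=t^{n-j}$, put $\epsilon=(-1)^n$, and set
\[ f(X)=\prod_{j=1}^{n}\bigl(X-p_j(t)\bigr)+\epsilon . \]
This is monic of degree $n$ with coefficients in $\ZZ[t]$, so $\cO$ is free of rank $n$ over $\ZZ[t]$ on $1,X,\dots,X^{n-1}$, and the regular representation gives a ring embedding $\cO\hookrightarrow\mathbf{Mat}_n(\ZZ[t])$ carrying $\cO^\times$ into $\mathbf{GL_n}(\ZZ[t])$ and $\det$ to the norm $N_{\cO/\ZZ[t]}$. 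In $\cO$ we have $\prod_j(X-p_j)=f(X)-\epsilon=-\epsilon\in\{\pm1\}$, so each $X-p_j$ is a unit (inverse $-\epsilon\prod_{i\ne j}(X-p_i)$), the $X-p_j$ commute, and their product is torsion; moreover $N_{\cO/\ZZ[t]}(X-p_j)=(-1)^n f(p_j)=(-1)^n\epsilon=1$, so each $X-p_j$ lies in $\slp$.

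Next I would study $f$ over $\QQ((t^{-1}))$ via its Newton polygon for $v=-\deg$. Replacing the constant term by a perturbation of valuation $0$ does not move the vertices of the polygon once $n\ge2$, so $f$ has the same Newton polygon as $\prod_j(X-p_j)$, whose roots $p_j$ have the $n$ distinct valuations $v(p_j)$; hence the polygon has $n$ segments of horizontal length one, $f$ splits over $\QQ((t^{-1}))$ as $\prod_i(X-\lambda_i)$, and after relabelling $v(\lambda_i)=v(p_i)$. Let $\sigma_i\colon\cO\to\QQ((t^{-1}))$ be the homomorphism with $\sigma_i(X)=\lambda_i$; the Vandermonde matrix in $\lambda_1,\dots,\lambda_n$, rescaled in one column to have determinant $1$, is an element $g\in\sll$ with $g\beta g^{-1}=D(\sigma_1\beta,\dots,\sigma_n\beta)$ for all $\beta\in\cO$. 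From $v(\lambda_i)=v(p_i)$ one gets, for $i\ne j$, $v(\lambda_i-p_j)=\min\{v(\lambda_i),v(p_j)\}=v(p_i-p_j)<0$ (the two valuations being distinct); feeding this into $\sum_i v(\lambda_i-p_j)=v\bigl(f(p_j)\bigr)=0$ yields $v(\lambda_j-p_j)>0$.

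Consider now the logarithmic map $L\colon\cO^\times\to\RR^n$, $L(\beta)=(v(\sigma_1\beta),\dots,v(\sigma_n\beta))$, which lands in the hyperplane $\{\sum x_i=v(N_{\cO/\ZZ[t]}\beta)\}$; on norm-one units this is $H=\{\sum x_i=0\}$, of dimension $n-1$. By the previous paragraph the $n\times n$ matrix whose $j$-th row is $L(X-p_j)$ has positive diagonal entries, off-diagonal entries that are negative and symmetric in the indices (since $v(p_i-p_j)$ is), and zero row sums; it is thus the Laplacian of a weighted graph, and since its off-diagonal entries are all nonzero, of a connected one, so it has rank exactly $n-1$ and the vectors $L(X-p_j)$ span $H$. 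Together with the torsion relation $\prod_j(X-p_j)=\pm1$, this forces the abelian group $B=\langle X-p_1,\dots,X-p_n\rangle\leq\slp$ to have torsion-free rank exactly $n-1$, with $L$ injective on $B$ modulo torsion. Letting $A\leq B$ be a free abelian complement of the finite torsion subgroup of $B$, we get $A\cong\ZZ^{n-1}$ with $L|_A$ injective, which is (i), while $gAg^{-1}$ consists of diagonal matrices by construction, which is (ii).

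For (iii), let $a\in A$ be nontrivial; then $L(a)\ne0$, so $gag^{-1}=D(\sigma_1a,\dots,\sigma_na)$ acts on the standard apartment of the building for $\sll$ as translation by the nonzero vector $(v(\sigma_i a))_i=L(a)\in H$, hence has positive translation length and fixes no point of the building — and therefore neither does $a$. I expect the main work to lie in the two middle steps, precisely where the structure theory over locally compact fields used in \cite{BW1} is unavailable: checking via the Newton polygon that $f$ splits over $\QQ((t^{-1}))$ with the asserted valuations, and — the real crux — the rank computation, i.e.\ verifying that the valuation vectors of the explicit units $X-p_j$ are as independent as the single multiplicative relation among them permits. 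The remaining points (integrality, commutativity, normalizing $g$ into $\sll$, and the fact that a diagonal matrix with nonzero valuation vector is not elliptic) are routine.
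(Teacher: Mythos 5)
Your proof is correct, but it takes a genuinely different route from the paper's at both of the key steps. The paper's polynomial is $\prod_{i=1}^n(x+q_it)-1$ with the $q_i-1$ distinct primes; its ``unperturbed'' roots $-q_it$ all have the \emph{same} valuation, so the authors cannot split $f$ by a Newton-polygon argument. Instead they produce a single root $\alpha\in\mathbb{Q}((t^{-1}))$ by hand, solving recursively for the coefficients of a Laurent series, and they prove freeness of $A$ by reading off only the leading terms $p_it$ of the units $\alpha+q_{i+1}t$ and invoking unique factorization in $\mathbb{Z}$. Your choice of roots $t^{n-j}$ of pairwise distinct degrees is exactly what makes the Newton polygon of $f$ have $n$ length-one segments (the perturbation of the constant term is harmless since $v(c_0)=-n(n-1)/2<0$ for $n\ge2$), so $f$ splits completely over the complete field $\mathbb{Q}((t^{-1}))$ and the full valuation vectors of the units become computable; freeness then reduces to the rank of a connected weighted graph Laplacian. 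This is the standard $S$-unit/regulator argument, and your estimates ($v(\lambda_i-p_j)=v(p_i-p_j)<0$ off the diagonal, $v(\lambda_j-p_j)>0$ from the norm relation) are right. The second divergence is part (iii): the paper proves a separate building-theoretic lemma (a bounded subgroup of $\slr$ has all eigenvalues in $\overline{\mathbb{Q}}$), deduces that an elliptic element of $A$ has eigenvalues $\pm1$ and hence is torsion; you get (iii) essentially for free from the injectivity of $L$ on $A$ plus the standard fact that a determinant-one diagonal matrix with nonzero valuation vector generates an unbounded subgroup and therefore fixes no point. Minor further differences: your $\mathcal{O}=\mathbb{Z}[t][X]/(f)$ always has rank $n$ whether or not $f$ is irreducible, whereas the paper works in a $d\times d$ block with $d\le n$; and your normalization $\epsilon=(-1)^n$ makes the norms exactly $1$, so you avoid the paper's step of squaring the generators. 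The trade-off is that you lean on Hensel/Newton-polygon theory where the paper is deliberately more elementary and hands-on.
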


The proof of this proposition is modelled on a classical approach
to finding diagonalizable subgroups of
$\mathbf{SL_n}(\mathbb{Z})$. The proof will take a few steps.

\begin{subsection}{A polynomial over $\mathbb{Z}[t]$ with roots
in $\mathbb{Q}((t^{-1}))$}\label{s:poly}

Let $\{p_1,p_2,p_3, ...\}=\{2,3,5, ...\}$ be the sequence of prime
numbers. Let $q_1 =1$. For $2\leq i \leq n$, let $q_i = p_{i-1}+1$.

Let $f(x) \in \mathbb{Z}[t][x]$ be the polynomial given by
$$f(x)=\Big[\prod_{i=1}^n(x+q_it)\Big]-1$$ It will be clear by
the conclusion of our proof that $f(x)$ is irreducible over
$\mathbb{Q}(t)$, but we will not need to use this directly.

\begin{lemma}\label{l:root} There is some $\alpha \in \mathbb{Q}((t^{-1}))$
 such that $f(\alpha)=0$. \end{lemma}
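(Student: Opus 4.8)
The plan is to exhibit a root $\alpha$ directly as a power series in $t^{-1}$ with rational coefficients, and to produce it by a Hensel-type / successive-approximation argument inside the complete field $\mathbb{Q}((t^{-1}))$. First I would rewrite the equation $f(\alpha)=0$ as $\prod_{i=1}^n(\alpha+q_it)=1$. Since the left side is a product of $n$ linear factors each of ``degree $1$'' in $t$ (with leading behaviour $q_i t$, noting $q_1=1$ and $q_i=p_{i-1}+1\ge 3$ for $i\ge2$, so all $q_i$ are distinct positive integers), the product has $t$-adic valuation $n$ unless $\alpha$ is chosen to cancel. The natural guess is that $\alpha$ should be a series starting at order $t^{-(n-1)}$ or so; more precisely I would look for $\alpha=\sum_{k\ge k_0} a_k t^{-k}$ and determine the $a_k$ recursively by expanding the product and matching coefficients of each power of $t$.

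The cleanest way to organize this is to use the standard fact that $\mathbb{Q}[[t^{-1}]]$ is a complete local ring with maximal ideal $(t^{-1})$, and to apply Hensel's Lemma (or just Newton's method) to $f$ viewed appropriately. The obstruction is that $f$ as written has coefficients in $\mathbb{Z}[t]\subseteq\mathbb{Q}[[t^{-1}]]$ but is not ``small'', so I would first rescale: substitute $x = t\cdot y$, or better, isolate the factor $(\alpha+q_1 t)=(\alpha+t)$ and write $\alpha+t = \big[\prod_{i=2}^n(\alpha+q_it)\big]^{-1}$. This is a fixed-point equation $\alpha = -t + \big[\prod_{i=2}^n(\alpha+q_it)\big]^{-1}$. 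Setting $\alpha = -t + \beta$ with $\beta = O(t^{-(n-1)})$ small, the map $\beta \mapsto \big[\prod_{i=2}^n(\beta+(q_i-1)t)\big]^{-1}$ sends a small element to a small element (its value has valuation $\ge n-1$ since each factor $\beta+(q_i-1)t$ has valuation exactly $-1$, using $q_i\ne 1$), and I would check it is a contraction in the $t^{-1}$-adic metric. Completeness of $\mathbb{Q}((t^{-1}))$ then gives a fixed point $\beta$, hence the desired root $\alpha=-t+\beta\in\mathbb{Q}((t^{-1}))$.

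The main thing to verify carefully — and the only real obstacle — is the contraction / derivative estimate: one needs that the map $g(\beta):=\big[\prod_{i=2}^n(\beta+(q_i-1)t)\big]^{-1}$ satisfies $v(g(\beta)-g(\beta'))> v(\beta-\beta')$ (here $v$ is the $t^{-1}$-adic valuation, normalized so $v(t)=-1$), which amounts to estimating $g'(\beta)=-g(\beta)^2\cdot\sum_{i=2}^n\prod_{j\ne i}(\beta+(q_j-1)t)$; since $g(\beta)$ has valuation $n-1$ and the sum has valuation $-(n-2)$, the product has valuation $n-1+(n-1)-(n-2)=n>0$, so $g'$ takes small elements to small elements and the contraction is strict. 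Feeding this into the Banach fixed-point theorem over the complete metric space $t^{-(n-1)}\mathbb{Q}[[t^{-1}]]$ finishes the proof. (Alternatively, since we do not actually need uniqueness, one can just run the explicit coefficient recursion and observe it never requires dividing by zero — the $q_i$ being distinct nonzero integers guarantees the relevant leading coefficients are invertible in $\mathbb{Q}$ — but the fixed-point phrasing is shorter to write.)
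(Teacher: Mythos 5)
Your proof is correct, and it takes a genuinely different (though closely related) route from the paper. Both arguments hinge on the same key observation: with the ansatz $\alpha=-t+\beta$ for $\beta$ of positive valuation, the factor $\alpha+q_1t=\alpha+t=\beta$ becomes small while each remaining factor $\alpha+q_it=\beta+(q_i-1)t$ is a unit of valuation $-1$ precisely because $q_i-1=p_{i-1}\neq 0$. The paper exploits this by writing $\alpha=\sum_{i\ge 0}c_it^{1-in}$ with $c_0=-1$ and solving for the $c_m$ one at a time: the equation for $c_m$ is linear with coefficient $\prod_{j\ge 2}(q_j-1)\neq 0$ (the vanishing of $c_{0,1}$ kills all other contributions), which is exactly the ``never dividing by zero'' alternative you mention in your last parenthesis. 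You instead package the recursion as the fixed-point equation $\beta=g(\beta)=\bigl[\prod_{i\ge 2}(\beta+(q_i-1)t)\bigr]^{-1}$ and invoke completeness of $\mathbb{Q}((t^{-1}))$ plus the contraction estimate $v(g(\beta)-g(\beta'))\ge v(\beta-\beta')+n$, which you verify correctly (the $\ge$ on the valuation of $\sum_{i}\prod_{j\neq i}$ is all that is needed). Your version is more conceptual and shorter to justify rigorously; the paper's version produces the series expansion explicitly, but the only downstream fact actually used later (in showing $A\cong\mathbb{Z}^{n-1}$) is that the leading term of $\alpha$ is $-t$, which your construction also delivers since $v(\beta)=n-1>0$. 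The only blemishes are in your motivational remarks (e.g.\ the sign of the ``$t$-adic valuation $n$'' of the generic product, and the initial guess $\alpha=O(t^{-(n-1)})$, both superseded by the correct ansatz $\alpha=-t+\beta$); they do not affect the argument.
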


\begin{proof} We want to show that there are $c_i \in \mathbb{Q}$
such that if $\alpha = \sum_{i=0}^\infty c_i t^{1-in}$ then
$f(\alpha)=0$.\vspace{.75mm}

To begin let $c_0 =-1$. We will define the remaining $c_i$
recursively. Define $c_{i,k}$ by $\alpha +q_kt = \sum_{i=0}^\infty
c_{i,k} t^{1-in}$. Thus, $c_{i,k}=c_i$ when $i\geq 1$, each
$c_{0,k}$ is contained in $\mathbb{Q}$, and $c_{0,1}=0$.

That $\alpha$ is a root of $f$ is equivalent to \begin{align*} 1
&= \prod_{k=1}^n (\alpha +q_kt)= \prod_{k=1}^n \big(\sum_{i=0}^\infty c_{i,k} t^{1-in}\big) \\
&=\sum_{i=0}^\infty \big( \sum _{\sum_{k=1}^ni_k=i} \big(\prod
_{k=1}^n c_{i_k,k}\big) \big) t^{n(1-i)}
\end{align*}

Our task is to find $c_m$'s so that the above is satisfied.

Note that for the above equation to hold we must have
$$0\cdot t^n =  \sum _{\sum_{k=1}^ni_k=0} \big(\prod
_{k=1}^n c_{i_k,k}\big) t^{n(1-0)}$$ That is $$0 = \prod_{k=1}^n
c_{0,k}$$ which is an equation we know is satisfied because
$c_{0,1}=0$. Now assume that we have determined
$c_0,c_1,...,c_{m-1} \in \mathbb{Q}$. We will find $c_m \in
\mathbb{Q}$.

Notice that the first coefficient in our Laurent series expansion
above which involves $c_m$ is the coefficient for the $t^{-nm}$
term. This follows from the fact that each $i_k$ is nonnegative.

Since $$\sum _{\sum_{k=1}^ni_k=m} \big(\prod _{k=1}^n
c_{i_k,k}\big)$$ is the coefficient of the $t^{-nm}$ term in
the expansion of $1$, we have $$0=\sum _{\sum_{k=1}^ni_k=m}
\big(\prod _{k=1}^n c_{i_k,k}\big)$$

The above equation is linear over $\mathbb{Q}$ in the single
variable $c_m$ and the coefficient of $c_m$ is nonzero. Indeed, $\sum_{k=1}^ni_k=m$, each $i_k \geq 0$, and $c_0,...,c_{m-1}\in \mathbb{Q}$ are assumed to be known
quantities. Thus, $c_m \in \mathbb{Q}$.

\end{proof}

\end{subsection}

\begin{subsection}{Matrices representing ring
multiplication}\label{s:mat}

By Lemma~\ref{l:root} we have that the field
$\mathbb{Q}(t)(\alpha) \leq \mathbb{Q}((t^{-1}))$ is an extension
of $\mathbb{Q}(t)$ of degree $d$ where $d\leq n$. It follows that
$\mathbb{Z}[t][\alpha]$ is a free $\mathbb{Z}[t]$-module of rank
$d$ with basis $\{1,\alpha , \alpha ^2 , ... , \alpha ^{d-1} \}$.

For any $y \in \mathbb{Z}[t][\alpha]$, the action of $y$ on
$\mathbb{Q}(t)(\alpha )$  by multiplication is a linear
transformation that stabilizes $\mathbb{Z}[t][\alpha]$. Thus, we
have a representation of $\mathbb{Z}[t][\alpha]$ into the ring of
$d \times d$ matrices with entries in $\mathbb{Z}[t]$. We embed
the ring of $d\times d$ matrices with entries in $\mathbb{Z}[t]$
into the upper left corner of the ring of $n \times n$ matrices
with entries in $\mathbb{Z}[t]$.

By Lemma~\ref{l:root} $$\prod_{i=1}^n(\alpha + q_it)=1$$ so each
of the following matrices are invertible:
$$\alpha + q_1t,\; \alpha + q_2t,\;  ...,\; \alpha + q_nt$$ (We
will be blurring the distinction between the elements of
$\mathbb{Z}[t][\alpha]$ and the matrices that represent them.)

For $1\leq i \leq n-1$, we let $a_i =\alpha + q_{i+1}t$. Since
$a_i$ is invertible, it is an element of
$\mathbf{GL_n}(\mathbb{Z}[t])$, and hence has determinate $\pm1$.
By replacing each $a_i$ with its square, we may assume that $a_i
\in \slp$ for all $i$. We let $A=\langle a_1,...a_{n-1} \rangle$
so that $A$ is clearly abelian as it is a representation of
multiplication in an integral domain. This group $A$ will satisfy
Proposition~\ref{p:tori}.

\end{subsection}

\begin{subsection}{$A$ is free abelian on the $a_i$}

To prove part (i) of Proposition~\ref{p:tori} we have to show
that if there are $m_i \in \mathbb{Z}$ with
$$\prod_{i=1}^{n-1}a_i^{m_i}=1$$ then each $m_i=0$. But the first
nonzero term in the Laurent series expansion for $\alpha$ is $-t$,
which implies that the first nonzero term in the Laurent series
expansion for each $a_i$ is $-t+q_{i+1}t=p_it$. Hence, the first
nonzero term of
$$\prod_{i=1}^{n-1}a_i^{m_i}=1$$ is
$$\prod_{i=1}^{n-1}(p_it)^{m_i}=t^0$$ Thus
$$\prod_{i=1}^{n-1}p_i^{m_i}=1$$and it follows by the uniqueness
of prime factorization that $m_i=0$ for all $i$ as desired.

Thus, part (i) of Proposition~\ref{p:tori} is proved.

\end{subsection}

\begin{subsection}{$A$ is diagonalizable}

Recall that $\alpha$ is a $d\times d$ matrix with entries in
$\mathbb{Z}[t]$ where $d$ is the degree of the minimal polynomial
of $\alpha$ over $\mathbb{Q}(t)$. Let that minimal polynomial be
$q(x)$. Because the characteristic of $\mathbb{Q}(t)$ equals $0$,
$q(x)$ has distinct roots in $\mathbb{Q}(t)(\alpha)$.

Let $Q(x)$ be the characteristic polynomial of the matrix
$\alpha$. The polynomial $Q$ also has degree $d$ and leading
coefficient $\pm 1$ with $Q(\alpha)=0$. Therefore, $q=\pm Q$.
Hence, $Q$ has distinct roots in $\mathbb{Q}(t)(\alpha )$ which
implies that $\alpha$ is diagonalizable over
$\mathbb{Q}(t)(\alpha) \leq \mathbb{Q}((t^{-1}))$. That is to say
that there is some $g \in \sll$ such that $g \alpha g ^{-1}$ is
diagonal.

Because every element of $\mathbb{Z}[t][\alpha]$ is a linear
combination of powers of $\alpha$, we have that $g(
\mathbb{Z}[t][\alpha]) g^{-1}$ is a set of diagonal matrices. In
particular, we have proved part (ii) of Proposition~\ref{p:tori}.

\end{subsection}

\begin{subsection}{$A$ has trivial stabilizers}

To prove part (iii) of Proposition~\ref{p:tori} we begin with the
following

\begin{lemma}\label{l:alg} If $\Gamma \leq \slr$ is bounded under the valuation
for $\mathbb{Q}((t^{-1}))$, then the eigenvalues for any $\gamma
\in \Gamma$ lie in $\overline{\mathbb{Q}}$. \end{lemma}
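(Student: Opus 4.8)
The plan is to unwind the hypothesis and then push everything through the valuation $v$ on $\mathbb{Q}((t^{-1}))$ normalized so that $v(t^{-1})=1$; thus for a nonzero polynomial $p(t)\in\mathbb{Q}[t]$ one has $v(p)=-\deg p\le 0$, with equality exactly when $p$ is a nonzero constant. Saying that $\Gamma\le\slr$ is bounded means precisely that there is some $N\in\mathbb{N}$ so that every matrix entry of every element of $\Gamma$ is a polynomial of degree at most $N$; since $\Gamma$ is a group, this applies simultaneously to all the powers $\gamma^m$, $m\in\mathbb{Z}$, of a fixed $\gamma\in\Gamma$. Because $\mathbb{Q}((t^{-1}))$ is complete with respect to $v$, the valuation extends uniquely to $\overline{\mathbb{Q}((t^{-1}))}$, and it is there that the eigenvalues $\lambda_1,\dots,\lambda_n$ of $\gamma$ (the roots of the characteristic polynomial $\chi_\gamma\in\mathbb{Q}[t][x]$) live and have well-defined valuations.

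The first real step is a standard observation: if $M$ is an $n\times n$ matrix over $\overline{\mathbb{Q}((t^{-1}))}$ all of whose entries have valuation at least $c$, then every eigenvalue of $M$ has valuation at least $c$; one sees this by choosing an eigenvector, picking a coordinate of it of smallest valuation, and reading off that coordinate of the eigenvalue equation using the ultrametric inequality. Triangularizing $\gamma$ over $\overline{\mathbb{Q}((t^{-1}))}$ shows that the eigenvalues of $\gamma^m$ are exactly $\lambda_1^m,\dots,\lambda_n^m$. Since the entries of $\gamma^m$ have degree at most $N$, i.e. valuation at least $-N$, the observation gives $m\,v(\lambda_i)=v(\lambda_i^m)\ge -N$ for every $m\in\mathbb{Z}$ and every $i$; letting $m\to\pm\infty$ forces $v(\lambda_i)=0$ for all $i$.

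Finally, the coefficients of $\chi_\gamma(x)=\prod_i(x-\lambda_i)$ are, up to sign, the elementary symmetric functions of $\lambda_1,\dots,\lambda_n$, so by the ultrametric inequality each of them has valuation at least $0$. But these coefficients lie in $\mathbb{Q}[t]$, and a polynomial in $t$ with nonnegative valuation must be a constant; hence $\chi_\gamma(x)\in\mathbb{Q}[x]$, and therefore its roots $\lambda_i$ lie in $\overline{\mathbb{Q}}$. I expect the only real subtlety to be bookkeeping: one must use all the powers $\gamma^m$ (not merely $\gamma$ and $\gamma^{-1}$) in order to pin the valuations of the eigenvalues down to exactly $0$ rather than just to a bounded interval, and one should be a little careful that both the valuation and the triangularization are taking place over the algebraic closure, where the eigenvalues actually live.
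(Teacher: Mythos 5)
Your proof is correct, but it takes a genuinely different route from the paper's. The paper reads ``bounded'' as ``fixes a point $z$ of the Euclidean building'' (equivalent to your reading for subgroups, by the Bruhat--Tits fixed point theorem), uses Soul\'e's theorem to conjugate $z$ into the standard sector based at the vertex fixed by $\slt$, and then observes that the resulting valuation constraints on $D(t^{m_1},\dots,t^{m_n})\slt D(t^{m_1},\dots,t^{m_n})^{-1}$ force $\Gamma$ into a group isomorphic to $\prod_{i}\mathbf{SL_{k_i}}(\mathbb{Q})\ltimes U$ with $U$ unipotent upper-triangular; the eigenvalues of such a matrix are those of its rational block-diagonal part, hence lie in $\overline{\mathbb{Q}}$. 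You instead read ``bounded'' directly as a uniform bound $N$ on the degrees of all entries of all elements of $\Gamma$, apply this to every power $\gamma^m$, and combine the ultrametric eigenvalue estimate with $m\,v(\lambda_i)\ge -N$ for all $m\in\mathbb{Z}$ to get $v(\lambda_i)=0$; the elementary symmetric functions of the $\lambda_i$ then lie in $\mathbb{Q}[t]$ with nonnegative valuation, hence are constants, so $\chi_\gamma\in\mathbb{Q}[x]$. Each step checks out, including the correct observation that it is boundedness of the whole group, not of the single element, that controls all powers $\gamma^m$. Your argument is more elementary and self-contained: it needs no building geometry and no appeal to Soul\'e's fundamental domain, only the unique extension of the valuation to the algebraic closure of the complete field $\mathbb{Q}((t^{-1}))$. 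What the paper's approach buys is reuse of the machinery already set up in Lemma~\ref{l:stab} and an explicit structural description of bounded subgroups of $\slr$, which makes the rationality of the semisimple part transparent; your approach buys brevity and independence from \cite{So}.
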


\begin{proof} We let $X$ be the Euclidean building for $\sll$.
By assumption, $\Gamma  z =z$ for some $z \in X$.

Let $x_0 \in X$ be the vertex stabilized by $\slt$. We denote a
diagonal matrix in $\mathbf{GL_n}(\mathbb{Q}((t^{-1})))$ with
entries $s_1, s_2,..., s_n \in \mathbb{Q}((t^{-1}))^\times$ by
$D(s_1,s_2,...,s_n)$, and we let $\mathfrak{S} \subseteq X$ be the
sector based at $x_0$ and containing vertices of the form
$D(t^{m_1}, t^{m_2},...,t^{m_n})x_0$ where each $m_i \in
\mathbb{Z}$ and $m_1 \geq m_2 \geq ... \geq m_n$.

The sector $\mathfrak{S}$ is a fundamental domain for the action
of $\slr$ on $X$ \cite{So} which implies that there is some $h\in
\slr$ with $hz \in \mathfrak{S}$.

Clearly we have $(h\Gamma h^{-1} ) hz =hz$, and since eigenvalues
of $h\Gamma h^{-1}$ are the same as those for $\Gamma$, we may
assume that $\Gamma $ fixes a vertex $z \in \mathfrak{S}$.

Fix $m_1,...,m_n \in \mathbb{Z}$ with $m_1 \geq ... \geq m_n \geq
0$ and such that $z = D(t^{m_1},...,t^{m_n})x_0$. Without loss of
generality, there is a partition of $n$
--- say $\{k_1,...,k_\ell\}$ --- such that
$$\{m_1,...,m_n\}= \{ q_1,...,q_1, \; q_2,...,q_2, \;..., \; q _\ell ,...
q_\ell\}$$ where each $q_i$ occurs exactly $k_i$ times and
$$q_1>q_2>...>q_\ell$$

We have that $D(t^{m_1},...,t^{m_n})^{-1}\Gamma
D(t^{m_1},...,t^{m_n}) x_0 = x_0$. That gives us,
$D(t^{m_1},...,t^{m_n})^{-1} \Gamma D(t^{m_1},...,t^{m_n}) \subset
\slt$. Furthermore, a trivial calculation of resulting valuation
restrictions for the entries of \newline $D(t^{m_1},...,t^{m_n})
\slt D(t^{m_1},...,t^{m_n})^{-1}$ shows that $ \Gamma $ is
contained in a subgroup of $\sll$ that is isomorphic to
$$\prod_{i=1}^\ell \mathbf{SL_{k_i}}(\mathbb{Q})\ltimes U$$ where
$U \leq \sll$ is a group of upper-triangular unipotent matrices.

The lemma is proved.

\end{proof}

Our proof of Proposition~\ref{p:tori} will conclude by proving

\begin{lemma} No nontrivial element of $A$
 fixes a point in the Euclidean building for $\sll$.
 \end{lemma}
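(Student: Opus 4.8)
The plan is to argue by contradiction. Suppose some nontrivial $\gamma=\prod_{i=1}^{n-1}a_i^{m_i}\in A$ (so some $m_i\neq 0$; recall each generator $a_i$ equals $(\alpha+q_{i+1}t)^2$) fixes a point of the Euclidean building $X$ for $\sll$. I will produce an eigenvalue of $\gamma$ that already lies in $\mathbb{Q}((t^{-1}))$, and extract enough arithmetic from it to force all $m_i=0$. Since $\gamma$ is the image, under the embedding of the regular representation of $\mathbb{Z}[t][\alpha]$ into $n\times n$ matrices set up in Section~\ref{s:mat}, of multiplication by $\lambda:=\prod_{i=1}^{n-1}(\alpha+q_{i+1}t)^{2m_i}$ on the ring $\mathbb{Z}[t][\alpha]$, Cayley--Hamilton shows that $\lambda$ is a root of the characteristic polynomial of that multiplication operator, hence of the characteristic polynomial of $\gamma$; thus $\lambda$ is an eigenvalue of $\gamma$, and $\lambda\in\mathbb{Q}(t)(\alpha)\leq\mathbb{Q}((t^{-1}))$.

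Next I would pin $\lambda$ down. Since $\gamma$ --- and hence the group $\langle\gamma\rangle\leq\slp\leq\slr$ --- fixes a point of $X$, it is bounded under the valuation, so Lemma~\ref{l:alg} applies: every eigenvalue of $\gamma$, in particular $\lambda$, lies in $\overline{\mathbb{Q}}$. A short valuation computation shows $\mathbb{Q}((t^{-1}))\cap\overline{\mathbb{Q}}=\mathbb{Q}$ --- any element of $\mathbb{Q}((t^{-1}))$ algebraic over $\mathbb{Q}$ must have valuation $0$, and it must agree with its residue (which lies in $\mathbb{Q}$) up to something of strictly positive valuation, which is then forced to vanish --- so $\lambda\in\mathbb{Q}$. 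On the other hand, $f(\alpha)=0$ says precisely that $\prod_{l=1}^{n}(\alpha+q_lt)=1$ as an identity in the ring $\mathbb{Z}[t][\alpha]$, so each factor $\alpha+q_lt$ is a unit of $\mathbb{Z}[t][\alpha]$; hence $\lambda,\lambda^{-1}\in\mathbb{Z}[t][\alpha]$. Since $\mathbb{Z}[t][\alpha]$ is free over $\mathbb{Z}[t]$ on $1,\alpha,\dots,\alpha^{d-1}$ we have $\mathbb{Z}[t][\alpha]\cap\mathbb{Q}=\mathbb{Z}$, so $\lambda\in\mathbb{Z}^{\times}=\{\pm 1\}$.

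To finish, I would reuse the leading-term bookkeeping from the proof of part (i) of Proposition~\ref{p:tori}: the Laurent expansion of $\alpha+q_{i+1}t$ begins with $p_it$, so that of $\lambda$ begins with $\big(\prod_{i=1}^{n-1}p_i^{2m_i}\big)t^{2(m_1+\dots+m_{n-1})}$. Comparing with $\lambda=\pm 1$ forces $m_1+\dots+m_{n-1}=0$ and $\prod_{i=1}^{n-1}p_i^{2m_i}=1$, and then uniqueness of prime factorization in $\mathbb{Z}$ gives $m_i=0$ for every $i$, contradicting the choice of $\gamma$. This proves the lemma, hence part (iii) of Proposition~\ref{p:tori}.

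The step I expect to be the crux is the second one. Boundedness of $\langle\gamma\rangle$ by itself only yields that $\lambda$ has valuation $0$, which gives no more than $m_1+\dots+m_{n-1}=0$; it is essential to invoke Lemma~\ref{l:alg} to learn that $\lambda$ is algebraic over $\mathbb{Q}$, and then to combine this with the integrality of $\lambda$ inside the ring $\mathbb{Z}[t][\alpha]$ in order to pin $\lambda$ to $\pm1$ --- only then does the prime-factorization argument close the proof.
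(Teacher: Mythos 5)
Your proof is correct and follows essentially the paper's route: both arguments hinge on Lemma~\ref{l:alg} together with $\overline{\mathbb{Q}}\cap\mathbb{Q}((t^{-1}))=\mathbb{Q}$ and an integrality observation to force the relevant eigenvalue(s) into $\{\pm 1\}$. The only real divergence is the closing step --- you track the single distinguished eigenvalue $\lambda=\prod(\alpha+q_{i+1}t)^{2m_i}\in\mathbb{Z}[t][\alpha]$ and rerun the leading-term/prime-factorization computation from part (i), whereas the paper looks at all eigenvalues via the characteristic polynomial and then invokes diagonalizability plus torsion-freeness of $A\cong\mathbb{Z}^{n-1}$; both endings are valid.
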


\begin{proof} Suppose $a \in A$ fixes a point in the building. We will show
that $a=1$.
Let $F(x)\in \mathbb{Z}[t][x]$ be the characteristic polynomial
for $a \in \slp$. Then
$$F(x)=\pm \prod_{i=1}^n(x - \beta _i)$$ where each $\beta _i \in
\mathbb{Q}((t^{-1}))$ is an eigenvalue of $a$. By the previous
lemma, each $\beta _i \in \overline{\mathbb{Q}}$. Hence, each
$\beta _i \in \mathbb{Q} = \overline{\mathbb{Q}} \cap
\mathbb{Q}((t^{-1}))$. It follows that $F(x) \in \mathbb{Z}[x]$ so
that each $\beta _i$ is an algebraic integer contained in
$\mathbb{Q}$. We conclude that each $\beta _i $ is contained in
$\mathbb{Z}$.

Recall, that $a$ has determinate $1$, and that the determinate of
$a$ can be expressed as $\prod _{i=1}^n \beta _i$. Hence, each
$\beta _i$ is a unit in $\mathbb{Z}$, so each eigenvalue $\beta_i
= \pm 1$. It follows -- by the diagonalizability of $a$ -- that
$a$ is a finite order element of $A \cong \mathbb{Z}^{n-1}$. That
is, $a=1$.

\end{proof}

We have completed our proof of Proposition~\ref{p:tori}.

\end{subsection}

\begin{section}{Body of the proof}\label{s:body}

Let $P \leq \sll$ be the subgroup where each of the first $n-1$
entries along the bottom row equal $0$. Let $R_u(P) \leq P$ be the
subgroup of elements that contain a $(n-1)\times(n-1)$ copy of the
identity matrix in the upper left corner. Thus $R_u(P) \cong
\mathbb{Q}((t^{-1}))^{n-1}$ with the operation of vector addition.

Let $L \leq P $ be the copy of
$\mathbf{SL_{n-1}}(\mathbb{Q}((t^{-1})))$ in the upper left corner
of $\sll$. We apply Proposition~\ref{p:tori} to $L$ (notice that
the $n$ in the proposition is now an $n-1$) to derive a subgroup
$A \leq L$ that is isomorphic to $\mathbb{Z}^{n-2}$. By the same
proposition, there is a matrix $g \in L$ such that $gAg^{-1}$ is
diagonal.

Let $b \in \sll$ be the diagonal matrix given in the notation from
the proofs of Lemmas~\ref{l:stab} and~\ref{l:alg} as
$D(t,t,...,t,t^{-(n-1)})$. Note that $b \in P$ commutes with $L$,
and therefore, with $A$. Thus the Zariski closure of the group
generated by $b$ and $A$ determines an apartment in $X$, namely
$g^{-1}\mathcal{A}$ where $\mathcal{A}$ is the apartment
corresponding to the diagonal subgroup of $\sll$.

\begin{subsection}{Actions on $g^{-1}\mathcal{A}$.}

If $x_* \in g^{-1}\mathcal{A}$, then it follows from
Proposition~\ref{p:tori} that the convex hull of the orbit of
$x_*$ under $A$ is an $(n-2)$-dimensional affine space that we
will name $V_{x_*}$. Furthermore, the orbit $Ax_*$ forms a lattice
in the space $V_{x_*}$.

We let $g^{-1}\mathcal{A}(\infty)$ be the visual boundary of
$g^{-1}\mathcal{A}$ in the Tits boundary of $X$. The visual image
of $V_{x_*}$ is clearly an equatorial sphere in
$g^{-1}\mathcal{A}(\infty)$. Precisely, we let $P^-$ be the
transpose of $P$. Then $P$ and $P^-$ are opposite vertices in
$g^{-1}\mathcal{A}(\infty)$. It follows that there is a unique
sphere in $g^{-1}\mathcal{A}(\infty)$ that is realized by all
points equidistant to $P$ and $P^-$. We call this sphere
$S_{P,P^-}$.

\begin{lemma}\label{l:eq} The visual boundary of $V_{x_*}$ equals
$S_{P,P^-}$.

\end{lemma}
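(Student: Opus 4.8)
The plan is to identify both sides of the claimed equality by showing each is the unique codimension-one sphere in $g^{-1}\mathcal{A}(\infty)$ fixed pointwise by the visual action of $A$, together with a description via $P$ and $P^-$. First I would set up coordinates: the apartment $\mathcal{A}$ is the standard apartment for the diagonal torus of $\sll$, so $\mathcal{A}(\infty)$ is an $(n-2)$-sphere (recall $n$ here plays the role of the original $n$; the building is for $\sll$), and $g^{-1}\mathcal{A}$ is its translate. The group $A \leq L$ acts on $g^{-1}\mathcal{A}$ by translations once we conjugate by $g$, since $gAg^{-1}$ is diagonal by Proposition~\ref{p:tori}(ii). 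The translation directions span exactly the $(n-2)$-dimensional subspace $V_{x_*}$ (this is already asserted just before the lemma), so the visual boundary $V_{x_*}(\infty)$ is the equatorial sphere of $g^{-1}\mathcal{A}(\infty)$ spanned by those asymptotic directions — equivalently, the great subsphere fixed pointwise by the $A$-action at infinity.

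Next I would show $S_{P,P^-}$ is the same great subsphere. The points $P$ and $P^-$ are antipodal in $g^{-1}\mathcal{A}(\infty)$; the set of points equidistant from them is by definition a great $(n-2)$-sphere, the ``equator'' with poles $P, P^-$. So the content of the lemma is that this particular equator coincides with $V_{x_*}(\infty)$. The key computation is to check that the translation directions of $A$ (i.e., the eigenvalue-exponent directions of $gAg^{-1}$, which live in the Cartan subalgebra coordinatized by $\log|{\cdot}|$ of the diagonal entries) are precisely those orthogonal to the axis from $P^-$ to $P$. Concretely: $A$ sits inside $L = \mathbf{SL_{n-1}}$ in the upper-left corner, and $b = D(t,\dots,t,t^{-(n-1)})$ is the direction distinguishing $P$ from $P^-$ — $b$ points ``toward'' $P$ and $b^{-1}$ toward $P^-$. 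Since $A \leq L$, every translation direction of $A$ has last coordinate (the $t^{-(n-1)}$ slot exponent) equal to $0$ and lies in the $\mathfrak{sl}_{n-1}$ directions, which is exactly the hyperplane of the Cartan orthogonal to the $b$-direction. Hence the great sphere $V_{x_*}(\infty)$ is the equator with poles $b(\infty) = P$-direction and $b^{-1}(\infty) = P^-$-direction, i.e. $S_{P,P^-}$.

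I would structure the write-up as: (1) conjugate everything by $g$ to reduce to the standard apartment, where the torus acts by honest translations and $A$'s image is a rank-$(n-2)$ subgroup; (2) identify $V_{x_*}(\infty)$ with the great subsphere spanned by $A$'s translation directions; (3) identify $S_{P,P^-}$ with the equator whose poles are the directions toward $P$ and $P^-$; (4) observe these poles are the directions of $b$ and $b^{-1}$, and that $A$'s directions, living in the $\mathbf{SL_{n-1}}$ corner, span exactly the orthogonal complement of the $b$-line inside the Cartan. The main obstacle is step (4): one must be careful that ``equidistant to $P$ and $P^-$'' — a metric condition on $g^{-1}\mathcal{A}(\infty)$ coming from the spherical/Tits metric — really translates into the linear-algebraic condition ``last exponent coordinate $= 0$,'' and that the rank of $A$'s image is full ($n-2$) so that $V_{x_*}(\infty)$ is all of that equator rather than a proper subsphere. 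The full-rank claim follows from Proposition~\ref{p:tori}(i) applied to $L$ (giving $A \cong \mathbb{Z}^{n-2}$) together with (iii) forcing the translation action to be faithful and hence of rank $n-2$, so $V_{x_*}$ is genuinely $(n-2)$-dimensional as already noted.
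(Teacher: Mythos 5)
Your proof is correct, but it takes a genuinely different route from the paper's. The paper reduces to the standard apartment via $g \in P \cap P^-$, observes that $gV_{x_*}$ lies at finite Hausdorff distance from an orbit of the full diagonal subgroup of $L$, and then concludes by a symmetry argument: the inverse-transpose involution of $\sll$ preserves that diagonal orbit (hence its boundary sphere) while interchanging $P$ and $P^-$, which forces the boundary sphere to be the equidistant sphere $S_{P,P^-}$. You instead carry out the Cartan computation explicitly: the translation vectors of $gAg^{-1}$ have vanishing last coordinate, hence lie in the hyperplane orthogonal to the direction of $b=D(t,\dots,t,t^{-(n-1)})$, whose forward and backward endpoints at infinity are the vertices $P$ and $P^-$; so the great subsphere they span is exactly the equator with those poles. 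Your version makes concrete precisely what the paper leaves implicit (that ``equidistant to $P$ and $P^-$'' is the linear condition ``last exponent coordinate zero''), at the cost of being longer; the paper's symmetry argument is slicker but asks the reader to supply the same identification. Both arguments rely on the full-rank/lattice assertion made just before the lemma, and you correctly cite it rather than reprove it. One small slip: your parenthetical claim that $V_{x_*}(\infty)$ is ``the great subsphere fixed pointwise by the $A$-action at infinity'' is not right --- $A$ acts on $g^{-1}\mathcal{A}$ by translations, so it fixes \emph{all} of $g^{-1}\mathcal{A}(\infty)$ pointwise, not just the equator. This aside plays no role in your actual argument (which uses the span of the asymptotic directions, correctly), but it should be deleted.
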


\begin{proof}

Since $g \in P\cap P^-$, it suffices to prove that $gV_{x_*}$ is
the sphere in the boundary of $\mathcal{A}$ that is determined by
the vertices $P$ and $P^-$.

Note that $gV_{x_*}$ is a finite Hausdorff distance from any orbit
of a point in $\mathcal{A}$ under the action of the diagonal
subgroup of $L$. The result follows by observing that the inverse
transpose map on $\sll$ stabilizes diagonal matrices while
interchanging $P$ and $P^-$.

\end{proof}

We let $R_1,R_2,...,R_{n-1}$ be the standard root subgroups of
$R_u(P)$. Recall that associated to each $R_i$ there is a closed
geodesic hemisphere $H_i \subseteq \mathcal{A}(\infty)$ such that
any nontrivial element of $R_i$ fixes $H_i$ pointwise and
translates any point in the open hemisphere $\mathcal{A}(\infty) -
H_i$ outside of $\mathcal{A}(\infty)$. Note that $\partial H_i$ is
a codimension $1$ geodesic sphere in $\mathcal{A}(\infty)$.

We let $M \subseteq g^{-1}\mathcal{A}(\infty)$ be the union of
chambers in $g^{-1}\mathcal{A}(\infty)$ that contain the vertex
$P$. There is also an equivalent geometric description of $M$:

\begin{lemma}\label{l:simplex}
The union of chambers $M \subseteq g^{-1}\mathcal{A}(\infty)$ can
be realized as an $(n-2)$-simplex. Furthermore, $$M =
\bigcap_{i=1}^{n-1}g^{-1}H_i$$ and, when $M$ is realized as a
single simplex, each of the $n-1$ faces of $M$ is contained in a
unique equatorial sphere $g^{-1}\partial H_i = \partial
g^{-1}H_i$.
\end{lemma}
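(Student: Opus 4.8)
The plan is to pin down the combinatorics of the apartment $\mathcal{A}(\infty)$ for $\sll$ explicitly in terms of the root system of type $A_{n-1}$, transport everything by $g^{-1}$, and then match two descriptions of $M$: as a union of chambers containing a vertex, and as an intersection of hemispheres. First I would recall that $\mathcal{A}(\infty)$ is the Coxeter complex of the spherical Weyl group $S_n$ acting on the sphere of unit vectors in the ``diagonal" apartment; its chambers correspond to orderings of the $n$ diagonal coordinates. The vertex $P$ of $\mathcal{A}(\infty)$ is the vertex of type corresponding to the maximal parabolic ``stabilize the last coordinate line", i.e. the vertex fixed by the subgroup $\mathbf{SL_{n-1}}\ltimes R_u(P)$. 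The chambers of $\mathcal{A}(\infty)$ containing this vertex are exactly those whose closure meets that vertex, and there are $(n-1)!$ of them — precisely one for each ordering of the first $n-1$ coordinates — so their union is the closed star of $P$, which is a simplex cone on the link. Since the link of a vertex in a type $A_{n-1}$ Coxeter sphere is the Coxeter sphere of type $A_{n-2}$, whose realization as the boundary of a simplex is standard, the star itself is realizable as a single $(n-2)$-simplex. Applying $g^{-1}$ (an isometry of $X$ carrying $\mathcal{A}(\infty)$ to $g^{-1}\mathcal{A}(\infty)$ and the vertex $P$ to itself, since $g\in L\leq P$) gives the first assertion.

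Next I would identify the hemispheres $H_i$. Each standard root subgroup $R_i\leq R_u(P)$ is the root group for the root $e_i-e_n$ (in coordinates on the diagonal apartment), and the associated closed hemisphere $H_i\subseteq\mathcal{A}(\infty)$ is the set of boundary points $\xi$ on which that root is non-positive, with $\partial H_i$ the great sphere $\{e_i-e_n=0\}$; this is exactly the standard relationship between a root group of a reductive group over a valued field and the half-apartments of its building. The point $P$, being the vertex ``$e_n$ minimal among nothing — i.e. $e_n$ last in every ordering realized", lies in $H_i$ for every $i$, in fact in every $\partial H_i$ only at... — more carefully, $P$ is the barycentric vertex on which all the roots $e_i-e_n$ take their extreme value, so the closed star of $P$ is cut out by the inequalities defining the $H_i$, giving $M=\bigcap_{i=1}^{n-1}H_i$ inside $\mathcal{A}(\infty)$; applying $g^{-1}$ yields $M=\bigcap_{i=1}^{n-1}g^{-1}H_i$. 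Finally, the $n-1$ codimension-$1$ faces of the $(n-2)$-simplex $M$ correspond to dropping each of the $n-1$ defining inequalities in turn, so the $i$-th face lies in $\partial H_i$, hence in $g^{-1}\partial H_i=\partial g^{-1}H_i$; uniqueness of the containing equatorial sphere is immediate since a codimension-$1$ face spans a unique great sphere.

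The step I expect to be the main obstacle is the bookkeeping that makes precise the claim ``$P$ lies in $H_i$ and in $\partial H_i$ in just the right way so that $\bigcap H_i$ is the closed star of $P$ rather than something larger or smaller." Concretely one must check that the linear functionals cutting out the $H_i$ (the roots $e_i - e_n$, $1\le i\le n-1$) are, up to positive scaling, exactly the functionals supporting the walls through $P$, and that their common non-positive locus on the sphere is the cone over the $A_{n-2}$ link — equivalently, that these $n-1$ roots form a system of simple roots for the parabolic $S_{n-1}$ fixing $P$. That is a routine but careful root-system computation; once it is in place, everything else follows by applying the isometry $g^{-1}$ and invoking the standard dictionary between root groups, half-apartments, and walls in a Euclidean building.
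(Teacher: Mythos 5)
Your argument is correct in substance but takes a genuinely different route from the paper's. You work in the metric realization: $\mathcal{A}(\infty)$ is the unit sphere in $\{\textstyle\sum x_i=0\}\subseteq\mathbb{R}^n$ with the $A_{n-1}$ Coxeter structure, $M'$ is the closed star of the vertex $P$, i.e.\ the spherical simplicial cone $\{x_i\ge x_n,\ 1\le i\le n-1\}$ cut out by $n-1$ linearly independent functionals, and the three assertions (simplex, intersection of hemispheres, faces in walls) become linear algebra plus the dictionary between root groups and half-apartments. The paper instead works purely combinatorially: it describes $\mathcal{A}(\infty)$ as the flag complex on the parabolics $P_I$, identifies $M'$ as the full subcomplex on $\{P_J:\emptyset\neq J\subseteq\{1,\dots,n-1\}\}$, recognizes this as the barycentric subdivision of the abstract simplex on $P_{\{1\}},\dots,P_{\{n-1\}}$, and verifies $M'=\bigcap H_i$ and $F_i\subseteq\partial H_i$ by computing exactly which subspaces $V_I$ each $R_i$ stabilizes ($R_iV_I=V_I$ iff $n\in I$ implies $i\in I$) and by counting the two chambers adjacent to a codimension-one simplex. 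Your version makes the ``realized as a single simplex'' claim transparent and is shorter once the root-system setup is in place; the paper's version avoids invoking the metric realization and stays within the incidence combinatorics of parabolics it uses elsewhere. Two small points to repair in your write-up: with the paper's conventions $R_i\le R_u(P)$ fixes $P$, and $P$ lies in the direction $(1,\dots,1,-(n-1))$, so $H_i$ is where $e_i-e_n$ is non-\emph{negative} (your ``non-positive locus'' of $e_i-e_n$ would give the star of $P^-$); and the walls $\partial H_i$ do not pass through $P$ --- they support the faces of the star of $P$, while $P$ lies in the interior of each $H_i$. Neither affects the structure of the argument.
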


\begin{proof}
Let $M' \subseteq \mathcal{A}(\infty)$ be the union of chambers in
$\mathcal{A}(\infty)$ containing the vertex $P$. Since
$M=g^{-1}M'$, it suffices to prove that $M'$ is an $(n-2)$-simplex
with $M'=\cap_{i=1}^{n-1}H_i$ and with each face of $M'$ contained
in a unique $\partial H_i$.

For any nonempty, proper subset $I \subseteq \{1,2,...,n\}$, we
let $V_I$ be the $|I|$-dimensional vector subspace of
$\mathbb{Q}((t^{-1}))^n$ spanned by the coordinates given by $I$,
and we let $P_I$ be the stabilizer of $V_I$ in $\sll$. For
example, $P=P_{\{1,2,...,n-1\}}$.

Recall that the vertices of $\mathcal{A}(\infty)$ are given by the
parabolic groups $P_I$, that edges connect $P_I$ and $P_{I'}$
exactly when $I \subseteq I'$ or $I' \subseteq I$, and that the
remaining simplicial description of $\mathcal{A}(\infty)$ is given
by the condition that $\mathcal{A}(\infty)$ is a flag complex.

We let $\mathcal{V}$ be the set of vertices in
$\mathcal{A}(\infty)$ of the form $P_J$ where $\emptyset \neq J
\subseteq \{1,2,...,n-1\}$. Note that $M'$ is exactly the set of
vertices $\mathcal{V}$ together with the simplices described by
the incidence relations inherited from $\mathcal{A}(\infty)$.
Thus, $M'$ is easily seen to be isomorphic to a barycentric
subdivision of an abstract $(n-2)$-simplex. Indeed, if
$\overline{M'}$ is the abstract simplex on vertices
$P_{\{1\}},P_{\{2\}},...,P_{\{n-1\}}$, then a simplex of dimension
$k$ in $\overline{M'}$ corresponds to a unique $P_J \in
\mathcal{V}$ with $|J|=k+1$. So we  have that $M'$ can be
topologically realized as an $(n-2)$-simplex.

Let $F_i$ be a face of the simplex $\overline{M'}$. Then there is
some $1 \leq i \leq n-1$ such that the set of vertices of $F_i$ is
exactly
\mbox{$\{P_{\{1\}},P_{\{2\}},...,P_{\{n-1\}}\}-P_{\{i\}}$}.

Note that $R_i V_I =V_I$ exactly when $n \in I$ implies $i \in I$.
It follows that $R_i$ fixes $M'$ pointwise, and thus $M' \subseteq
H_i$ for all $1 \leq i \leq n-1$. Furthermore, if $P_I \in H_i$
for all $1 \leq i \leq n-1$, then $R_i P_I = P_I$ for all $i$ so
that $n \in I$ implies $i \in I$ for all $1 \leq i \leq n-1$. As
$I$ must be a proper subset of $\{1,2,...,n\}$, we have $P_I \in
\mathcal{V}$, so that $M'=\cap_{i=1}^{n-1}H_i$.

All that remains to be verified for this lemma is that $F_i
\subseteq \partial H_i$. For this fact, recall that $F_i$ is
comprised of $(n-3)$-simplices in $\mathcal{A}(\infty)$ whose
vertices are given by $P_J$ where $J \subseteq
\{1,2,...,n-1\}-\{i\}$. Hence, if $\sigma \subseteq
\mathcal{A}(\infty)$ is an $(n-3)$ simplex of
$\mathcal{A}(\infty)$ with $ \sigma \subseteq F_i$, then $\sigma $
is a face of exactly $2$ chambers in $\mathcal{A}(\infty)$:
$\mathfrak{C}_P$ and $\mathfrak{C}_{P_{J'}}$ where
$\mathfrak{C}_P$ contains $P$ and thus $\mathfrak{C}_P \subseteq
M'$, and $\mathfrak{C}_{P_{J'}}$ contains $P_{J'}$ where
$J'=\{1,2,...,n\}-\{i\}$ and thus $\mathfrak{C}_{P_{J'}}
\nsubseteq M'$. Furthermore, $\sigma = \mathfrak{C}_P \cap
\mathfrak{C}_{P_{J'}}$.

Since $R_i V_{J'}\neq V_{J'}$, it follows that
$\mathfrak{C}_{P_{J'}}$ is not fixed by $R_i$. Since
$\mathfrak{C}_{P_J}$ is fixed by $R_i$ we have that $\sigma =
\mathfrak{C}_P \cap \mathfrak{C}_{P_{J'}} \subseteq \partial H_i$.
 Therefore, $F_i \subseteq
\partial H_i$.
\end{proof}

For any vertex $y \in X$, we let $C_y \subseteq X$ be the union of
sectors based at $y$ and limiting to a chamber in $M$. Thus, $C_y$
is a cone. Note also that because any chamber in
$g^{-1}\mathcal{A}(\infty)$ has diameter less than $\pi /2$, it
follows that $M \cap S_{P,P^-}=\emptyset$. Therefore, if we choose
$x _* , y \in g^{-1}\mathcal{A}$ such that $x_*$ is  closer to $P$
than $y$, then $C_y \subseteq g^{-1}\mathcal{A}$ and $V_{x_*} \cap
C_y$ is a simplex of dimension $n-2$.

We will set on a fixed choice of $y$ before $x_*$, and we will
choose $y$ to satisfy the below

\begin{lemma}\label{l:cone} There is some $y \in g^{-1}\mathcal{A}$ such that
 the $\mathbb{Q}[[t^{-1}]]$-points of $R_u(P)$ fix $C_y$ pointwise.
\end{lemma}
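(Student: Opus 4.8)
The plan is to make the geometric statement concrete by choosing $y$ deep inside the cone direction $M$. Recall that $R_u(P) \cong \mathbb{Q}((t^{-1}))^{n-1}$ acts on $X$ by translations along horospheres, and the subgroup of $\mathbb{Q}[[t^{-1}]]$-points consists of those unipotent matrices whose off-diagonal entries lie in the valuation ring $\mathbb{Q}[[t^{-1}]]$. Each root subgroup $R_i$ fixes its hemisphere $H_i$ in $\mathcal{A}(\infty)$ pointwise, and by Lemma~\ref{l:simplex} the boundary simplex $M = g^{-1}M'$ satisfies $M = \cap_{i=1}^{n-1} g^{-1}H_i$, so every $R_i$ fixes $M$ pointwise in the boundary. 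The point is that this boundary fixing propagates into the interior of $X$ once we move $y$ far enough toward $M$ inside the apartment $g^{-1}\mathcal{A}$.

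First I would reduce to the apartment $\mathcal{A}$ by applying $g$, using that $g \in P \cap P^-$ (as in Lemma~\ref{l:eq}), so it suffices to find $y' = gy \in \mathcal{A}$ with the $\mathbb{Q}[[t^{-1}]]$-points of $R_u(P)$ fixing $C_{y'} \subseteq \mathcal{A}$, where now $C_{y'}$ is the cone in $\mathcal{A}$ on $M' = $ the union of chambers through $P = P_{\{1,\dots,n-1\}}$. Second, I would record explicitly the action of a generic element $u \in R_u(P)(\mathbb{Q}[[t^{-1}]])$ on a point $D(t^{s_1},\dots,t^{s_n})x_0 \in \mathcal{A}$: conjugating $u$ by $D(t^{s_1},\dots,t^{s_n})$ multiplies the entry of $u$ in position $(i,n)$ by $t^{s_i - s_n}$, and this conjugate lies in $\slt$ (hence fixes $x_0$) precisely when $s_i - s_n$ is large enough to absorb the $\mathbb{Q}[[t^{-1}]]$-entry, i.e.\ essentially when $s_i \geq s_n$. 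Third, since $C_{y'}$ consists of points limiting to $M'$, every vertex of $C_{y'}$ has the form $y' \cdot D(t^{s_1},\dots,t^{s_n})x_0$ with $s_i - s_n \to +\infty$ in the cone directions corresponding to the first $n-1$ coordinates; choosing $y'$ at a vertex far enough out along the $P$-direction (concretely $y' = D(t^{N},\dots,t^{N},t^{-(n-1)N})x_0$ for $N$ large) guarantees $s_i - s_n \geq 0$ — in fact $\geq N$ — for all points of $C_{y'}$ and all $i \leq n-1$, which forces $u$ to fix all of $C_{y'}$.

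The main obstacle is bookkeeping the valuation estimate uniformly over the whole cone $C_{y'}$ rather than at a single vertex: one must check that translating the base vertex $y'$ by a large power of $t$ in the $P$-direction dominates the bounded contribution of the $\mathbb{Q}[[t^{-1}]]$-coefficients of $u$ simultaneously for every vertex of $C_{y'}$, not just those on the spine. This is handled by noting that $C_{y'}$ is contained in the translate by $y'$ of the sub-sector of $\mathcal{A}$ cut out by the inequalities $s_i \geq s_n$ ($1 \leq i \leq n-1$), so the required lower bound on $s_i - s_n$ only improves as one moves outward in $C_{y'}$; the estimate at $y'$ itself is therefore the worst case, and a single choice of $N$ suffices. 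Finally I would remark that since $R_u(P)(\mathbb{Q}[[t^{-1}]])$ is generated by its intersections with the root subgroups $R_i$, it is enough to verify the claim for each $R_i$ separately, which is exactly the computation above with only the $(i,n)$-entry nonzero.
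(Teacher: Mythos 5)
Your route is genuinely different from the paper's. The paper argues softly: the $\mathbb{Q}[[t^{-1}]]$-points of $R_u(P)$ lie in $\slt$ and hence fix $x_0$, while $R_u(P)$ fixes $M$ pointwise at infinity, so these elements fix the entire cone $C_{x_0}$ pointwise; one then takes any $y \in C_{x_0}\cap g^{-1}\mathcal{A}$ and uses $C_y\subseteq C_{x_0}$. Your explicit valuation computation inside an apartment is a legitimate alternative, and it correctly identifies the fixed set: your inequality $s_i\geq s_n$ describes exactly the cone over $M'$ based at $x_0$, so within $\mathcal{A}$ the choice $N=0$ (i.e.\ $y'=x_0$) already works.

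However, your reduction step contains a genuine gap. Transporting the problem to $\mathcal{A}$ by $g$ replaces the acting group: an element $u$ fixes $C_y=g^{-1}C_{y'}$ if and only if $gug^{-1}$ fixes $C_{y'}$, so what you must verify in $\mathcal{A}$ is that $g\,R_u(P)(\mathbb{Q}[[t^{-1}]])\,g^{-1}$ fixes $C_{y'}$, not that $R_u(P)(\mathbb{Q}[[t^{-1}]])$ does. Since $g\in L$ normalizes $R_u(P)\cong \mathbb{Q}((t^{-1}))^{n-1}$ but acts on it by a $\mathbb{Q}((t^{-1}))$-linear map, it need not preserve the $\mathbb{Q}[[t^{-1}]]$-points, so the reduction as stated does not yield the lemma. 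The repair is exactly the slack you already built in: $g\,R_u(P)(\mathbb{Q}[[t^{-1}]])\,g^{-1}$ is a bounded subgroup of $R_u(P)$, hence contained in the elements whose $(i,n)$-entries have valuation at least $-N_0$ for some $N_0$ depending only on $g$; your estimate then requires $s_i-s_n\geq N_0$ rather than $s_i\geq s_n$, which your choice of $y'$ with $N\geq N_0$ provides uniformly over $C_{y'}$. Two smaller points: the direction of the conjugation by $D(t^{s_1},\dots,t^{s_n})$ should be stated carefully (it is $D^{-1}uD$ that must land in $\slt$), and you should note that fixing every vertex of $C_{y'}$ fixes it pointwise because the action is simplicial. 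With the $N_0$ correction made explicit, your argument goes through.
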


\begin{proof} Let $x_0$ be the
 point in $X$ stabilized by $\slt$. Recall that $R_u(P)M=M$ so
 that the $\mathbb{Q}[[t^{-1}]]$-points of $R_u(P)$ fix $C_{x_0}$
 pointwise.

 Because $M \subseteq g^{-1}\mathcal{A}(\infty)$, there is a $y
 \in  C_{x_0} \cap g^{-1}\mathcal{A}$. Any such $y$ satisfies the
 lemma.

\end{proof}

Choose $e$ such that with $x_*=e$ as above and with $y$ as in
Lemma~\ref{l:cone}, there exists a fundamental domain $D_e$ for
the action of $A$ on $V_e$ that is contained in $ C_y$. The choice
of $e$ can be made by travelling arbitrarily far from $y$ along a
geodesic ray in $g^{-1}\mathcal{A}$ that limits to $P$.

By the choice of $D_e$ we have that
$$AD_e=V_e$$ and that the
$\mathbb{Q}([[t^{-1}]])$-points of $R_u(P)$ fix $D_e$.

\end{subsection}

\begin{subsection}{The filtration}

We let $$X_0=\slp D_e$$ and for any $i \in \mathbb{N}$ we choose
an $\slp$-invariant and cocompact space $X_i \subseteq X$ somewhat
arbitrarily to satisfy the inclusions
$$X_0 \subseteq X_1 \subseteq X_2 \subseteq ... \subseteq
\cup_{i=1}^\infty X_i =X$$

In our present context, Brown's criterion takes on the following
form \cite{Brown filtration}

\begin{bc} By Lemma~\ref{l:stab}, the group $\slp$ is not of type
$FP_{n-1}$ if for any $i \in \mathbb{N}$, there exists some class
in the homology group $\widetilde{\text{H}}_{n-2} (X_0 \,,\,
\mathbb{Z})$ which is nonzero in $\widetilde{\text{H}}_{n-2} (X_i
\,,\, \mathbb{Z})$.
\end{bc}

\end{subsection}

\begin{subsection}{Translation to $P$ moves away from filtration sets}

The following is essentially Mahler's compactness criterion.

\begin{lemma}\label{l:ik} Given any $i\in\mathbb{N}$, there is some $k \in
\mathbb{N}$ such that $b^ke\notin X_i$. \end{lemma}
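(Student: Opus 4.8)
The plan is to show that the vertices $b^k e$ leave every cocompact $\slp$-invariant set, using the fact that $b = D(t,t,\dots,t,t^{-(n-1)})$ scales the last coordinate direction more and more negatively. Since each $X_i$ is $\slp$-cocompact, it suffices to show that the distance from $b^k e$ to the orbit $\slp e$ tends to infinity with $k$; then for any fixed $i$, choosing $k$ large enough forces $b^k e \notin X_i$ because $X_i$ lies within a bounded neighborhood of $\slp e$. Actually, more precisely: $X_i$ is cocompact, so it lies in a bounded neighborhood of $\slp z$ for any fixed vertex $z$; thus it is enough to prove $d(b^k e, \slp e) \to \infty$.

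First I would reduce to a linear-algebra statement via the dictionary between the building $X$ and lattices (or equivalently, norms) on $\mathbb{Q}((t^{-1}))^n$. The vertex $e$ corresponds to some lattice $\Lambda_e = h_e \cdot \mathbb{Q}[[t^{-1}]]^n$, and $b^k e$ corresponds to $b^k \Lambda_e$. An element $\gamma \in \slp$ moves $e$ close to $b^k e$ only if $\gamma^{-1} b^k \Lambda_e$ is close (in the building metric) to $\Lambda_e$, which means the two lattices are commensurable with uniformly bounded elementary divisors. The key point is that $b^k$ has determinant $1$ but its ``shape'' — the multiset of valuations of its elementary divisors relative to $\Lambda_e$ — is $(k,k,\dots,k,-(n-1)k)$, which has diameter $nk \to \infty$. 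Since conjugating by $\gamma \in \slp$ and comparing to $\Lambda_e$ can only change these invariants by a bounded amount (bounded in terms of $e$, not $k$), no such $\gamma$ can exist once $k$ is large. This is exactly the mechanism of Mahler's compactness criterion: the ``covolume'' is fixed at $1$ but the lattice degenerates.

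The cleanest way to make this rigorous is to pick a continuous $\slp$-invariant (or at least $\slp$-proper) function on $X$ that detects the degeneration — for instance, the logarithm of the ratio of the longest to the shortest vector in the corresponding lattice, or equivalently a Busemann-type function. One checks this function is bounded on $\slp e$ (because $\slp$ acts on $X$ with the arithmetic-type discreteness making orbits ``uniformly nondegenerate'' — concretely, $\mathbf{SL}_n(\mathbb{Z}[t])$-translates of a fixed lattice have elementary divisors bounded by the entries of a bounded generating region), while it is unbounded along $b^k e$. Then the level sets are the filtration-escaping sets we need.

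The main obstacle I anticipate is establishing the uniform bound on the orbit $\slp e$: a priori $\slp$ is huge and its orbit in $X$ is unbounded, so one must pin down exactly which invariant stays bounded. The resolution is that $\slp$ preserves the "integral structure" — the $\mathbb{Z}[t]$-lattice underlying $\mathbb{Q}((t^{-1}))^n$ — and an element of $\slp$ has matrix entries that are polynomials, but to move $e$ a bounded building-distance it must have bounded degree and bounded coefficients, hence lies in a fixed finite-dimensional (indeed finite, modulo the cocompact ambiguity already absorbed) region; within such a region the elementary-divisor invariants of $\gamma^{-1} b^k \Lambda_e$ versus $\Lambda_e$ stay within $O(1)$ of those of $b^k$, which blow up. Once that is in hand, the conclusion $b^k e \notin X_i$ for $k \gg i$ is immediate from cocompactness of $X_i$.
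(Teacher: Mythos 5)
Your overall strategy---use cocompactness of $X_i$ to reduce to showing that $\{\slp b^k e\}_k$ is unbounded in $\slp\backslash X$, then run a Mahler-type degeneration argument---is exactly the paper's (the paper itself introduces its proof as ``essentially Mahler's compactness criterion''). The gap is in the step that rules out a $\gamma\in\slp$ with $d(\gamma e,b^k e)$ bounded. You claim that the elementary divisors of $\gamma^{-1}b^k\Lambda_e$ relative to $\Lambda_e$ ``stay within $O(1)$ of those of $b^k$,'' which blow up. That comparison is false: left multiplication by elements of $\slp$ changes elementary divisors (Cartan projections) unboundedly. Already for $n=2$, the elementary matrix $E_{12}(t^k)\in\mathbf{SL_2}(\mathbb{Z}[t])$ (with $t^k$ in position $(1,2)$) has the same Cartan projection $(k,-k)$ as $b^k=D(t,t^{-1})^k$, so nothing can be deduced from the size of the Cartan projection of $b^k$ alone; the set of $\gamma$ that could potentially approximate $b^k e$ has degrees growing with $k$, not bounded ``in terms of $e$, not $k$.'' Your fallback invariant, ``longest over shortest vector of the lattice,'' is also not pinned down: a $\mathbb{Q}[[t^{-1}]]$-lattice contains arbitrarily short vectors, and the elementary divisors of $\gamma\Lambda_e$ relative to $\Lambda_e$ are unbounded over $\gamma\in\slp$, so neither reading gives a function that is simultaneously $\slp$-invariant and bounded on bounded subsets of the quotient.

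What is missing is an honestly $\slp$-invariant coercive quantity, and there are two standard ways to supply it. (a) The function $g\mapsto\min\{\|vg\|: v\in\mathbb{Z}[t]^n\setminus\{0\}\}$ ($v$ a row vector, $\|\cdot\|$ the sup of $t^{-1}$-adic absolute values) is $\slp$-invariant because $\slp$ preserves $\mathbb{Z}[t]^n$; it is bounded below on bounded subsets of $\sll$ because $\mathbb{Z}[t]$ is discrete in $\mathbb{Q}((t^{-1}))$ (a nonzero polynomial has absolute value at least $1$); and it tends to $0$ along $b^k h_e$ since $\|e_n b^k h_e\|=|t|^{-(n-1)k}\|e_n h_e\|$. (b) The paper's version: if $d(\gamma e,b^k e)\leq C$ then $b^k=\gamma\kappa$ with $\kappa$ in a fixed bounded set, and for the unipotent $u=E_{1n}(1)\in\slp$ one computes $b^{-k}ub^k=E_{1n}(t^{-nk})\to 1$, hence $\gamma^{-1}u\gamma=\kappa\big(b^{-k}ub^k\big)\kappa^{-1}\to 1$, contradicting the discreteness of $\slp$ in $\sll$. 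Either route closes the argument; your writeup gestures at (a), but the concrete justification you actually give (the $O(1)$ elementary-divisor comparison) is the step that fails.
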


\begin{proof} The lemma follows from showing that the sequence
$$\{\slp b^ke \}_k \subseteq \slp \backslash X$$ is unbounded.

Since stabilizers of points in $X$ are bounded subgroups of
$\sll$, the claim above follows from showing that the sequence
$$\{\slp b^k\}_k \subseteq \slp \backslash \sll$$ is unbounded.

But bounded sets in $\slp \backslash \sll$ do not contain
sequences of elements $\{\slp g_\ell\}_\ell$ such that $1 \in
\overline{g^{-1}_\ell ( \slp -\{1\} )g_\ell}$. And clearly $b^k$'s
contract some root groups to $1$. Thus none of the sequences above
is bounded. \end{proof}

\end{subsection}

\begin{subsection}{Applying Brown's criterion} As is described by Brown's criterion,
we will prove Theorem~\ref{t:main}
by fixing $X_i$ and finding an $(n-2)$-cycle in $X_0$ that
is nontrivial in the homology of $X_i$.

Recall that we denote the standard root subgroups of $R_u(P)$ by
$R_1,...,R_{n-1}$. Each group $g^{-1}R_jg$ determines a family of
parallel walls in $g^{-1}\mathcal{A}$. By Lemma~\ref{l:simplex},
each face of the cone $C_y$ is contained in a wall of one of these
families.

Choose $r_j \in g^{-1}R_jg$ for all $j$ such that $b^ke$ is
contained in the wall determined by $r_j$ where $k$ is determined
by $i$ as in Lemma~\ref{l:ik}. In particular, $r_j b^ke=b^ke$.

The intersection of the fixed point sets in $g^{-1} \mathcal{A}$
of the elements $r_1,...,r_{n-1}$ determine a cone that we name
$Z$. Note that $Z$ is contained in -- and is a finite Hausdorff
distance from -- the cone $C_y$.

Let $Z^-\subseteq g^{-1}\mathcal{A}$ be the closure of the set of
points in $g^{-1}\mathcal{A}$ that are fixed by none of the $r_j$.
The set $Z^-$ is a cone based at $b^ke$, containing $y$, and
asymptotically containing the vertex $P^-$.

As the walls of $Z^-$ are parallel to those of $Z$ -- and hence of
$C_y$, we have that $Z ^- \cap V_e$ is an $(n-2)$-dimensional
simplex. We will name this simplex $\sigma$.

The component of $Z^- - V_e$ that contains $b^k e$ is an
$(n-1)$-simplex that has $\sigma $ as a face. Call this $(n-1)$
simplex $Y$.

For any $\ell \in \mathbb{N}$, there are exactly $2^{n-1}$
possible subsets of the set $\{r_1^\ell,...,r_{n-1}^\ell\}$. For
each such subset $S_\ell$, we let
$$Y_{S_\ell} = (\prod _{g \in S_\ell} g)Y$$ and
$$ \sigma _{S_\ell} = (\prod_{g\in S _\ell} g)\sigma$$
Notice that the product of group elements in the equations above
are well-defined regardless of the order of the multiplication
since $R_u(P)$ is abelian. In the degenerate cases, $\prod_{g\in
\emptyset } g =1$, so $Y_{\emptyset } = Y$ and $\sigma _
{\emptyset } = \sigma$.

For any $\ell \in \mathbb{N}$, we let $Y_\ell = \cup _{S_\ell}
Y_{S_\ell}$. Because the wall in $g^{-1}\mathcal{A}$ determined by
$r_j^{\ell}$ is the same as the wall determined by $r_j$, the
space $Y_\ell$ is a closed ball containing $b^ke$ whose boundary
sphere is $\cup _{S _\ell} \sigma _{S _\ell}$. Indeed the
simplicial decomposition of $Y_\ell$ described above is isomorphic
to the simplicial decomposition of the unit ball in
$\mathbb{R}^{n-1}$ that is given by the $n-1$ hyperplanes defined
by setting a coordinate equal to $0$.

Let $\omega _\ell = \cup _{S _\ell} \sigma _{S_ \ell}$. Thus
$\omega _\ell = \partial Y_\ell$. Furthermore, the building $X$ is
$(n-1)$-dimensional and contractible, so any $(n-1)$-chain with
boundary equal to $\omega _\ell$ must contain $Y_\ell$ and thus
$b^k e$. That is for all $\ell \in \mathbb{N}$
$$[\omega _\ell] \neq 0 \in \widetilde{H}_{n-2}(X - b^ke \, , \, \mathbb{Z})$$
If we can show that $\omega _\ell \subseteq X_0$ for some choice
of $\ell$, then we will have proved our main theorem by
application of Brown's criterion since we would have
$$[\omega _\ell] \neq 0 \in \widetilde{H}_{n-2}(X_i \, , \, \mathbb{Z})$$ by
Lemma~\ref{l:ik}.

\begin{lemma} There exists some $\ell \in \mathbb{N}$ such that $\omega _\ell \subseteq X_0$.

\end{lemma}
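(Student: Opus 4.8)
The plan is to show that for $\ell$ sufficiently large, every simplex $\sigma_{S_\ell}$ in $\omega_\ell$ lies in the orbit $\slp D_e$. Since $\omega_\ell = \bigcup_{S_\ell}\sigma_{S_\ell}$, it suffices to show each $\sigma_{S_\ell}\subseteq \slp D_e$, and for this it is enough to find, for each subset $S_\ell\subseteq\{r_1^\ell,\dots,r_{n-1}^\ell\}$, an element $\gamma_{S_\ell}\in\slp$ carrying $\sigma_{S_\ell}$ into $D_e$. The natural candidate is $\gamma_{S_\ell} = \prod_{g\in S_\ell} g$ itself — but these elements $r_j\in g^{-1}R_jg$ are generally not in $\slp$, so that cannot work directly. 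Instead I would exploit that $D_e$ is, by construction, a fundamental domain for the action of $A$ on $V_e$, together with the fact that $\sigma = Z^-\cap V_e$ and each $\sigma_{S_\ell} = (\prod_{g\in S_\ell}g)\sigma$ is again a simplex sitting inside the affine subspace $(\prod_{g\in S_\ell}g)V_e$.

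First I would observe that $\sigma\subseteq C_y$: indeed $Z^-$ is a cone based at $b^ke$ containing $y$, and $\sigma = Z^-\cap V_e$ is the cross-section cut out by the codimension-one affine space $V_e$, so $\sigma$ lies in the portion of $g^{-1}\mathcal A$ bounded by the faces of $C_y$; since $x_* = e$ was chosen so that $D_e\subseteq C_y$ is a fundamental domain for $A$ on $V_e$, and $\sigma\subseteq V_e$, I can translate $\sigma$ by a single element $a\in A\le\slp$ into $D_e$. (This handles $S_\ell=\emptyset$, i.e. $\sigma_\emptyset=\sigma$.) For nonempty $S_\ell$, the key point is that the simplex $\sigma_{S_\ell}$ is a translate of $\sigma$ by elements of $g^{-1}R_jg$, and by Lemma~\ref{l:cone} (via the choice of $y$) and the choice of $e$, the $\mathbb Q[[t^{-1}]]$-points of $R_u(P)$ fix $D_e$; so after conjugating back by $g$, the relevant translation lies in a unipotent group fixing the cone $C_y$, and one shows $\sigma_{S_\ell}$ still lies inside the cone $C_y$ and inside the affine space $V_{x'}$ for a point $x'$ obtained from $e$ by the action of these unipotents. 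Then I again use that $D_e$ is a fundamental domain for $A$ on $V_e$ and that $A$ acts cocompactly, and pick up an element of $\slp$ (a product of an $A$-element with a $\slp\cap R_u(P)$-element, i.e. an integral unipotent) carrying $\sigma_{S_\ell}$ into $D_e$.

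The mechanism making $\ell$ large work: the walls in $g^{-1}\mathcal A$ determined by $r_j^\ell$ coincide with those determined by $r_j$, so $\omega_\ell$ stays combinatorially the same shape, but as $\ell\to\infty$ the simplices $\sigma_{S_\ell}$ are pushed farther out along the cone directions, and eventually each $\sigma_{S_\ell}$ — being at bounded combinatorial distance from $V_e$ and lying in a translate $V_{x'}$ of the lattice-structured affine space — is engulfed by the $\slp$-orbit of $D_e$ because $\slp\cap R_u(P)$ together with $A$ acts cocompactly on the union of the $V_{x'}$'s sweeping out $C_y$. More precisely, $\slp\cap\big(A\ltimes(R_u(P)\cap\slp)\big)$ acts cocompactly on $C_y$ (both factors preserve $C_y$ — $R_u(P)$ integrally fixes it pointwise by Lemma~\ref{l:cone}, and $A$ translates along $V$-directions), so $\slp D_e$ contains all of $C_y$ up to finitely many "missing'' bounded pieces near the base; choosing $\ell$ large enough places every $\sigma_{S_\ell}$ beyond those pieces. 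The main obstacle I anticipate is precisely this cocompactness/covering bookkeeping: verifying that $\slp D_e$ really does contain the deep part of the cone $C_y$, and that the translates $\sigma_{S_\ell}$ — which live in affine spaces $V_{x'}$ that need not equal $V_e$ — land in $\slp D_e$ rather than merely in $\slp V_e$. Handling that requires checking that the unipotent translations relating the various $V_{x'}$ to $V_e$ can be realized, up to an $A$-translation, by integral matrices in $\slp\cap R_u(P)$, which is where the choice that $R_u(P)$'s $\mathbb Q[[t^{-1}]]$-points fix $D_e$ (and not merely its $\mathbb Q$-points) does the real work.
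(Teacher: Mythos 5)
There is a genuine gap: your proposed mechanism for why a suitable $\ell$ exists is not the right one, and part of your geometric picture is incorrect. First, for nonempty $S_\ell$ the simplex $\sigma_{S_\ell}=(\prod_{g\in S_\ell}g)\sigma$ does \emph{not} lie in $C_y$ or even in the apartment $g^{-1}\mathcal{A}$: the elements $r_j$ fix only a half-apartment, and $\sigma$ lies in the part they move, so $\sigma_{S_\ell}$ is a ``folded'' copy of $\sigma$ living in a different apartment. Consequently an argument based on covering the deep part of the cone $C_y$ by $\slp$-translates of $D_e$ cannot reach the $\sigma_{S_\ell}$. Second, and more fundamentally, the role of $\ell$ is arithmetic, not geometric: it is a divisibility condition, not a largeness condition. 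The paper's proof decomposes each $u\in R_u(P)$ as $u=u'u''$ with $u'$ having entries in $\mathbb{Q}[t]$ and $u''$ in $\mathbb{Q}[[t^{-1}]]$; the $\mathbb{Q}[[t^{-1}]]$-part fixes $D_e$ by Lemma~\ref{l:cone}, and since the group law on $R_u(P)$ is vector addition, $(a^{-1}u^{m}a)'=m\,(a^{-1}ua)'$, so choosing $m$ divisible by the denominators of the (rational) coefficients of $(a^{-1}ua)'$ lands this part in $\slp$. Taking $\ell$ to be the product of these $\ell(a,r_i)$ over the finitely many $a\in A$ with $aD_e\cap\sigma\neq\emptyset$ then gives
$(\prod_{g\in S_\ell}g)\,aD_e=a\,[a^{-1}(\prod_{g\in S_\ell}g)a]'\,D_e\subseteq\slp D_e$. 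A large $\ell$ that fails these divisibility constraints would not work, so ``pushing $\sigma_{S_\ell}$ far enough out'' cannot substitute for this computation.

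Your instinct that the crux is realizing the unipotent translations, up to an $A$-translation, by integral elements is correct, and you correctly flag that the $\mathbb{Q}[[t^{-1}]]$-points of $R_u(P)$ fixing $D_e$ must do real work. But the cocompactness claim you lean on (that $A$ together with $\slp\cap R_u(P)$ acts cocompactly on $C_y$) is both unsubstantiated and beside the point: the relevant unipotents are in $g^{-1}R_u(P)g$, whose intersection with $\slp$ is exactly what is in question, and unipotents fix the cone rather than translate along it. The missing idea is the explicit conjugation-and-splitting identity above.
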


\begin{proof}
For any $u \in R_u(P)$ there is a decomposition $u=u'u''$ where
the entries of $u' \in R_u(P)$ are contained in $\mathbb{Q}[t]$
and the entries of $u'' \in R_u(P)$ are contained in
$\mathbb{Q}[[t^{-1}]]$.

For any $a \in A$ and $u \in R_u(P)$ there is a power $\ell (a,u)
\in \mathbb{N}$ such that
$$(a^{-1}u^{\ell(a,u)}a)' =((a^{-1}ua)')^{\ell(a,u)} \in \slp$$
(For the above equality recall that $A \leq L$ normalizes $R_u(P)$
and the group operation on $R_u(P)$ is vector addition.)

There are only finitely many $a \in A$ such that $aD_e \cap \sigma
\neq \emptyset$ (or equivalently, such that $aD_e \cap Z^- \neq
\emptyset$). Call this finite set $\mathcal{D}\subseteq A$.

At this point we fix $$\ell = \prod_ {a \in\mathcal{D}} \prod _{i=1}
^{n-1}\ell(a,r_i)$$ Thus,
$$[a^{-1} (\prod_{g \in S _\ell }g) a ]' \in  \slp$$
for any $a \in\mathcal{D}$ and any $S _\ell \subseteq \{r_i ^\ell
\}_{i=1}^{n-1}$.

Because $\omega _\ell = \cup _{S_ \ell} \sigma _{S _\ell}$ and $
\sigma _{S_\ell} = (\prod_{g\in S _\ell} g)\sigma =  (\prod_{g\in
S_\ell} g) (AD_e \cap Z^-)$, we can finish our proof of this lemma
by showing
$$\big(\prod_{g \in S_\ell}g\big)aD_e \subseteq X_0$$ for each $a\in\mathcal{D} \subseteq A \leq \slp$ and
each $S _\ell \subseteq \{r_i ^\ell \}_{i=1}^{n-1}$. For this,
recall that the $\mathbb{Q}[[t^{-1}]]$-points of $R_u(P)$ fix
$D_e$ and thus

\begin{align*} \big( \prod_{g \in S_\ell}g \big) a D_e & =
 a [a^{-1} \big( \prod_{g \in S_\ell}g \big) a ]D_e
  \\ & =a[a^{-1} \big( \prod_{g \in S_\ell}g \big) a ]'[a^{-1} \big( \prod_{g \in S_\ell}g \big) a ]''D_e \\
& =a [a^{-1} \big( \prod_{g \in S_\ell}g \big) a ]' D_e \\
& \subseteq \slp D_e \\ & = X_0
\end{align*}

\end{proof}

\end{subsection}

\end{section}

\bigskip \noindent \textbf{Authors email addresses:} \newline \noindent kb2ue@virginia.edu
\newline \noindent amir.mohammadi@yale.edu \newline \noindent
wortman@math.utah.edu

\end{document}